
\documentclass[journal]{IEEEtran}
\usepackage{blindtext}
\usepackage{graphicx}
\usepackage[utf8]{inputenc}
\usepackage{graphics} 
\usepackage{epsfig} 
\usepackage{mathptmx} 
\usepackage{amsmath} 
\usepackage{amssymb}  
\usepackage{cite}
\usepackage{multicol}
\usepackage{mathtools, cuted}
\usepackage[cmintegrals]{newtxmath}
\usepackage{epstopdf}
\usepackage{graphics} 
\usepackage{epsfig} 

\usepackage{mathptmx} 
\usepackage{amsmath} 
\usepackage{amssymb}  
\usepackage{cite}
\usepackage{multicol}
\usepackage{mathtools, cuted}
\usepackage[cmintegrals]{newtxmath}
\usepackage{mathtools}

\usepackage{mathptmx}
\usepackage{helvet}
\usepackage{courier}
\usepackage{subfigure}
\usepackage{type1cm}
\usepackage{titlesec}
\setcounter{MaxMatrixCols}{25}
\setcounter{secnumdepth}{4}
\usepackage{epsfig} 
\usepackage{mathptmx} 
\usepackage{amsmath} 
\usepackage{amssymb}  
\usepackage{cite}
\usepackage{multicol}
\usepackage{mathtools, cuted}
\usepackage[cmintegrals]{newtxmath}
\usepackage{makeidx}         
\usepackage{algorithm}
\usepackage{algpseudocode}
\DeclareMathOperator*{\argmin}{\arg\!\min}

\usepackage{multicol}        
\usepackage[bottom]{footmisc}
\usepackage{caption}
\usepackage{nomencl}
\usepackage[usenames, dvipsnames]{color}

\usepackage{multicol}        
\usepackage[bottom]{footmisc}
\usepackage{caption}
\usepackage{titlesec}
\usepackage{nomencl}
\usepackage[usenames, dvipsnames]{color}
\usepackage{multirow}

\usepackage{amsthm}
 
\theoremstyle{definition}
\newtheorem{definition}{Definition}

\theoremstyle{theorem}
\newtheorem{theorem}{Theorem}

\theoremstyle{remark}
\newtheorem{remark}{Remark}
\theoremstyle{proposition}
\newtheorem{proposition}{Proposition}
\theoremstyle{corollary}

\theoremstyle{proof}

\newtheorem{assumption}{Assumption}
\theoremstyle{assumption}

\theoremstyle{lemma}


%
\ifCLASSINFOpdf
\else
\fi
\hyphenation{op-tical net-works semi-conduc-tor}

\begin{document}
%
\title{Integration of A* Search and Classic Optimal Control for Safe Planning of Continuum Deformation of a Multi-Quadcopter System }
%
%
%

\author{Hossein Rastgoftar
\thanks{{\color{black}H. Rastgoftar is with the Department
of Aerospace Engineering, University of Michigan, Ann Arbor,
MI, 48109 USA e-mail: hosseinr@umich.edu.}}
}
%
%

\markboth{
}%
{Shell \MakeLowercase{\textit{et al.}}: Bare Demo of IEEEtran.cls for IEEE Journals}
%



\maketitle
\begin{abstract}
This paper offers an algorithmic approach to plan continuum deformation of a multi-quadcopter system (MQS) in an obstacle-laden environment. We treat the MQS as finite number of particles of a deformable body coordinating under a homogeneous transformation. In this context, we define the MQS homogeneous deformation coordination as a decentralized leader-follower problem, and integrate the principles of continuum mechanics, A* search method, and optimal control to safety and optimally plan MQS continuum deformation coordination. In particular, we apply the principles of continuum mechanics to obtain the safety constraints, use the A* search method to assign the intermediate configurations of the leaders by minimizing the travel distance of the MQS, and determine the leaders' optimal trajectories by solving a constrained optimal control problem.
The optimal planning of the continuum deformation coordination is acquired by the quadcopter team in a decentralized fashion through local communication. 

\end{abstract}

\begin{IEEEkeywords}
Large-Scale Coordination, Affine Transformation, Optimal Control, A* Search,  Safety, Decentralized Control, and Local Communication.
\end{IEEEkeywords}

\section{Introduction}\label{Introduction}
Multi-agent coordination has been an active research area over the past few decades. Many aspects of multi-agent coordination have been explored and several centralized and decentralized multi-agent control {\color{black}approaches} already exist. In spite of vast amount of existing research on multi-agent coordination, scalability, maneuverability, safety, resilience, and optimality of group coordination are still very important issues for exploration and study. The goal of this paper is to address these important problems in a formal and algorithmic way through integrating the principles of continuum mechanics, A* search method, and classic optimal control approach.

\subsection{Related Work}
Consensus and  containment control are two available decentralized muti-agent coordination approaches. Multi-agent consensus have found numerous applications such as flight formation control \cite{zhang2018collision}, multi-agent surveillance \cite{du2017pursuing}, and air traffic control \cite{artunedo2017consensus}. Consensus control of homogeneous and heterogeneous multi-agents systems \cite{cheng2018event} was studied in the past. Multi agent consensus under fixed \cite{tu2017decentralized} and switching \cite{munoz2017adaptive, liu2018leader} communication topologies have been widely investigated by the researchers over the past two decades. Stability of consensus algorithm in the presence of delay is analyzed in Ref. \cite{ma2020consensus}.  Researchers have {\color{black}also} investigated multi-agent consensus in the presence of actuation failure \cite{wang2019fault, shahab2019distributed}, sensor failure \cite{liu2017kalman}, and adversarial agents \cite{leblanc2011consensus}.

Containment control is a decentralized leader-follower multi-agent coordination approach in which the desired coordination is defined by leaders and acquired by followers through local communication. Early work studied stability and convergence of multi-agent containment protocol in Refs. \cite{ji2008containment, liu2012necessary}, under fixed \cite{li2016containment} or switching \cite{su2015multi} communication topologies, as well as multi-agent containment in the presence of fixed \cite{asgari2019necessary} and time-varying \cite{atrianfar2020sampled} time delays. Resilient containment control is studied in the presence of actuation failure \cite{cui2018command}, sensor failure \cite{ye2017observer}, and adversarial agents \cite{zuo2019resilient}. Also, researchers  investigated the problems of finite-time \cite{qin2019distributed} and fixed-time \cite{xu2020distributed} containment control of multi-agent systems in the past. 

\subsection{Contributions}
The main objective of this paper is to integrate the principles of continuum mechanics with search and optimization methods to safely plan continuum deformation of a multi-quadcopter system (MQS). In particular, we treat quadcopters as a finite number of particles of a $2$-D deformable body coordinating  in a $3$-D where the desired coordination {\color{black}of the continuum} is defined by a homogeneous deformation. Homogeneous deformation is a non-singular affine transformation which is classified as a Lagrangian continuum deformation problem. Due to linearity of homogeneous transformation, it can be defined as a decentralized leader-follower coordination problem in which leaders' desired positions are uniquely related to the components of the Jacobian matrix and rigid-body displacement vector of the homogeneous transformation at any time $t$. 

{\color{black}This paper develops} an algorithmic protocol for safe planning of coordination of a large-scale MQS by determining the global desired trajectories of leaders in an obstacle-laden motion space, containing obstacles with arbitrary geometries. To this end, we integrate the A* search method, optimal control planning, and eigen-decomposition to plan the desired trajectories of the leaders minimizing travel distances between their initial and final configurations. Containing the MQS by a rigid ball, the path of the center of the containment ball is safely determined using the A* search method. We apply the principles of Lagrangian continuum mechanics to decompose the homogeneous deformation coordination and {\color{black}to} ensure inter-agent collision avoidance {\color{black}through} constraining the deformation eigenvalues. {\color{black}By eigen-decomposition of a homogeneous transformation, we can also} determine the leaders' intermediate configurations and formally specify safety requirements for a large-scale MQS {\color{black}coordination} in a geometrically-constrained environment. Additionally, we {\color{black}assign} safe desired trajectories of leaders, connecting consecutive configurations of the leader agents, by solving a constrained optimal control planning problem.

This paper is organized as follows: Preliminary notions including graph theory definitions and position notations are presented in Section \ref{Preliminaries}. Problem Statement is presented in Section \ref{Problem Statement} and followed by continuum deformation coordination planning developed in Section \ref{Continuum Deformation Planning}. We review the existing approach for continuum deformation acquisition {\color{black}through local communication} in Section \ref{Continuum Deformation Acquisition}. Simulation results are presented in Section \ref{Simulation Results} and followed by Conclusion in Section \ref{Conclusion}.

\section{Preliminaries}\label{Preliminaries}
\subsection{Graph Theory Notions}\label{Graph Theory Notions}
We consider {\color{black}the} group coordination of a quadcopter team consisting of $N$ quadcopters in an obstacle-laden environment. Communication among quadcopters are defined by graph $\mathcal{G}\left(\mathcal{V},\mathcal{E}\right)$ with node set $\mathcal{V}=\{1,\cdots,N\}$, defining the index numbers of the quadcopters, and edge set $\mathcal{E}\subset \mathcal{V}\times \mathcal{V}$. In-neighbors of quadcopter $i\in \mathcal{V}$ is defined by set $\mathcal{N}_i=\left\{j:\left(j,i\right)\in \mathcal{E}\right\}$.

In this paper, quadcopters are treated as particles of a $2$-D continuum, where the desired coordination is defined by {\color{black}a} homogeneous transformation \cite{rastgoftar2020scalable}. {\color{black}A} desired {\color{black}$2$-D} homogeneous transformation is defined by three leaders and acquired by the remaining follower quadcopters through local communication. Without loss of generality, leaders and followers are identified by $\mathcal{V}_L=\left\{1,2,3\right\}\subset \mathcal{V}$ and $\mathcal{V}_F=\left\{4,\cdots,N\right\}$. Note that leaders move independently, therefore, $\mathcal{N}_i=\emptyset$, if $i\in \mathcal{V}_L$.

\begin{assumption}
Graph $\mathcal{G}\left(\mathcal{V},\mathcal{E}\right)$ is defined such that every follower quadcopter accesses position information of three in-ineighbor agents, thus, 
\begin{equation}
    \bigwedge_{i\in \mathcal{V}_F}\left(\mathcal{N}_i=3\right).
\end{equation}
\end{assumption}

\subsection{Position Notations}
In this paper, we define actual position $\mathbf{r}_i(t)=\begin{bmatrix}
x_i(t)&y_i(t)&z_i(t)
\end{bmatrix}^T$, global desired position $\mathbf{p}_i(t)=\begin{bmatrix}
x_{i,HT}(t)&y_{i,HT}(t)&z_{i,HT}(t)
\end{bmatrix}^T$, local desired position $\mathbf{r}_{i,d}(t)=\begin{bmatrix}
x_{i,d}(t)&y_{i,d}(t)&z_{i,d}(t)
\end{bmatrix}^T$, and reference position $\mathbf{p}_{i,0}=\begin{bmatrix}
x_{i,0}&y_{i,0}&0
\end{bmatrix}^T$ for every quadcopter $i\in \mathcal{V}$. Actual position $\mathbf{r}_i(t)$ is the output vector of the control system of quadcopter $i\in \mathcal{V}$. Global desired position of quadcopter $i\in \mathcal{V}$ is defined by a homogeneous transformation with the details provided in Ref. \cite{rastgoftar2020scalable} and discussed in Section \ref{Continuum Deformation Planning}. Local desired position of quadcopter $i\in \mathcal{V}$ is given by
\begin{equation}
    \mathbf{r}_{i,d}(t)=\begin{cases}
    \mathbf{p}_i(t)&i\in \mathcal{V}_L\\
    \sum_{j\in \mathcal{N}_i}\mathbf{r}_j(t)&i\in \mathcal{V}_F\\
    \end{cases}
    ,
\end{equation}
where $w_{i,j}>0$ is a constant communication weight between follower $i\in \mathcal{V}_F$ and in-neighbor quadcopter $j\in \mathcal{N}_i$, and
\begin{equation}
    \sum_{j\in \mathcal{N}_i}w_{i,j}=1.
\end{equation}
Followers' communication weights are consistent with the reference positions of quadcopters and satisfy the following equality constraints:
\begin{equation}
    \bigwedge_{i\in \mathcal{V}_F}\left(\sum_{j\in \mathcal{N}_i}w_{i,j}\left(\mathbf{p}_{j,0}-\mathbf{p}_{i,0}\right)=0\right).
\end{equation}
\begin{remark}
The initial configuration of the MQS is obtained by a rigid-body rotation of the reference configuration. Therefore,
initial position of every quadcopter $i\in \mathcal{V}$ denoted by $\mathbf{r}_{i,s}$ is not necessarily the same as the reference position $\mathbf{p}_{i,0}${\color{black},} but $\mathbf{r}_{i,s}$ and $\mathbf{p}_{i,0}$ satisfy the following relation:
\begin{equation}
    \bigwedge_{i=1}^{N-1}\bigwedge_{j=i+1 }^N\left(\|\mathbf{r}_{i,s}-\mathbf{r}_{j,s}\|=\|\mathbf{p}_{i,0}-\mathbf{p}_{j,0}\|\right),
\end{equation}
where $\|\cdot\|$ is the 2-norm symbol.
\end{remark}
\section{Problem Statement}\label{Problem Statement}

We treat the MQS as {\color{black}particles} of a {\color{black}$2$-D} deformable body navigating in an obstacle-laden environment. 
The desired formation of the MQS is given by
\begin{equation}\label{followerleaderdesiredrelation}
    \mathbf{y}_{F,HT}(t)=\mathbf{H}\mathbf{y}_{L,HT}(t),
\end{equation}
at any time $t\in [t_s,t_u]$, where $\mathbf{H}\in \mathbb{R}^{3\left(N-3\right)\times 9}$ is a constant shape matrix that is obtained based on reference positions in Section \ref{Continuum Deformation Planning}{\color{black}. Also,}
\begin{subequations}
\begin{equation}
    \mathbf{y}_{L,HT}=\mathrm{vec}\left(\begin{bmatrix}
\mathbf{p}_{1}&\cdots&\mathbf{p}_{3}
\end{bmatrix}^T\right)\in \mathbb{R}^{9\times 1},
\end{equation}
\begin{equation}
    \mathbf{y}_{F,HT}=\mathrm{vec}\left(\begin{bmatrix}
\mathbf{p}_{4}&\cdots&\mathbf{p}_{N}
\end{bmatrix}^T\right)\in \mathbb{R}^{3\left(N-3\right)\times 1}
\end{equation}
\end{subequations}
 aggregate the components of desired positions of followers and leaders, respectively, where ``vec'' {\color{black}is} the matrix vectorization symbol. Per Eq. \eqref{followerleaderdesiredrelation}, {\color{black}the} desired formation of followers{\color{black}, assigned by $\mathbf{y}_{F,HT}(t)$,}  is uniquely determined based on the desired leaders' trajectories defined by $\mathbf{y}_{L,HT}(t)$ over the time interval $\left[t_s,t_u\right]$. 
 The MQS is constrained to remain inside the rigid  containment ball
\begin{equation}
\resizebox{0.99\hsize}{!}{%
$
    \mathcal{S}\left({\mathbf{d}}\left(t\right),r_{\mathrm{max}}\right)=\left\{\left(x,y,z\right):\left(x-d_x\right)^2+\left(x-d_x\right)^2+\left(z-d_z\right)^2\leq r_{\mathrm{max}}^2\right\}
    $
    }
\end{equation}
with {\color{black}the} constant radius $r_{\mathrm{max}}$ and {\color{black}the} center $\mathbf{d}(t)=\begin{bmatrix}
d_x(t)&d_y(t)&d_z(t)
\end{bmatrix}^T$ at time $t\in \left[t_s,t_u\right]$. 

The main objective of this paper is to determine $\mathbf{y}_{L,HT}(t)$ and ultimate time $t_u$ such that the MQS travel distances are minimized, and the following constraints are all satisfied at any time $t\in \left[t_s,t_u\right]$:
\begin{subequations}
\begin{equation}\label{c1}
\forall t\in \left[t_s,t_u\right],\qquad \mathbf{y}_{L,HT}^T\left(t\right)\mathbf{\Psi}\mathbf{y}_{L,HT}\left(t\right)-A_s=0,
\end{equation}
\begin{equation}\label{c2}
\forall t\in \left[t_s,t_u\right],\qquad \bigwedge_{i\in \mathcal{V}}\bigwedge_{j\in \mathcal{V},j\neq i}\|\mathbf{r}_i(t)-\mathbf{r}_j(t)\|\neq 2\epsilon,
\end{equation}
\begin{equation}\label{c3}
\forall t\in \left[t_s,t_u\right],\qquad \bigwedge_{i\in \mathcal{V}_L} \left(z_{i,HT}\left(t\right)=d_z(t)\right),
\end{equation}
\begin{equation}\label{c4}
\forall t\in \left[t_s,t_u\right],\qquad \bigwedge_{\in \mathcal{V}}\left(\left(x_{i,HT}(t),y_{i,HT}(t)\right)\in \mathcal{S}\left({\color{black}\mathbf{d}}\left(t\right),r_{\mathrm{max}}\right)\right),
\end{equation}
\end{subequations}
where {\color{black}$x_{i,HT}(t)$ and $y_{i,HT}(t)$ are the $x$ and $y$ components of the global desired position of quadcopter $i\in \mathcal{V}$ at time $t\in \left[t_s,t_u\right]$,}
\begin{equation}
    \mathbf{\Psi}=\mathbf{O}^T\mathbf{P}^T\mathbf{P}\mathbf{O}
\end{equation}
is constant, 
\begin{subequations}
\begin{equation}
    \mathbf{P}={1\over 4}\begin{bmatrix}
    0&0&0&0&1& -1\\
    0&0&0&-1&0&1\\
    0&0&0&1&-1&0\\
    0&-1&1& 0&0&0\\
     1&0&-1& 0&0&0\\
      -1&1&0& 0&0&0\\
    \end{bmatrix}
    ,
\end{equation}
and $\mathbf{O}=\begin{bmatrix}
\mathbf{I}_6&\mathbf{0}_{6\times 3}
\end{bmatrix}$.
\end{subequations}
The constraint equation \eqref{c1} ensures that the area of the leading triangle, with vertices occupied by the desired position of the leaders, remains constant and equal to $A_s$ at any time $t\in \left[t_s,t_u\right]$. 
Constraint equation \eqref{c2} ensures that no two quadcopters collide{\color{black},} if every quadcopter {\color{black}$i\in \mathcal{V}$} can be enclosed by a ball with constant radius $\epsilon$. Constraint equation \eqref{c3} ensures that the desired formation of the MQS lies in a horizontal plane at any time $t\in \left[t_s,t_u\right]$. Per Eq. \eqref{c4}, the desired MQS formation is {\color{black}constrained} to remain inside the ball $\mathcal{S}\left({\mathbf{d}}(t),r_{\mathrm{max}}\right)$ at any time $t\in \left[t_s,t_u\right]$.

{\color{black}To accomplish the goal of this paper, we} integrate (i) A* search, (ii) eigen-decomposition, (iii) optimal control planning to assign leaders' optimal trajectories ensuring safety requirements \eqref{c1}-\eqref{c4} {\color{black}by performing the following sequential steps:}

\textbf{Step 1: Assigning Intermediate Locations of the Containment Ball:} Given initial and final positions of the center of the containment ball, denoted by {\color{black}$\bar{\mathbf{d}}_s=\mathbf{d}\left(t_s\right)=\bar{\mathbf{d}}_0$ and $\bar{\mathbf{d}}_u=\mathbf{d}\left(t_s\right)=\bar{\mathbf{d}}_{n_\tau}$}, and obstacle geometries, we apply the A* search method to determine the intermediate positions of the center of the containment ball $\mathcal{S}$, denoted by $\bar{\mathbf{d}}_{1}$, $\cdots$, $\bar{\mathbf{d}}_{n_\tau-1}$, such that: (i) the travel distance between the initial and final configurations of the MQS is minimized and (ii) the containment ball do not collide the obstacles{\color{black},} arbitrarily distributed in the coordination space. 

{\color{black}\textbf{Step 2: Assigning Leaders' Intermediate Configurations:} By knowing $\bar{\mathbf{d}}_{1}$, $\cdots$, $\bar{\mathbf{d}}_{n_\tau-1}$, we define
\begin{equation}\label{betak}
\beta_k= \dfrac{\sum_{j=0}^k\left(\bar{\mathbf{d}}_{j}-\bar{\mathbf{d}}_{0}\right)}{\sum_{j=0}^{n_\tau}\left(\bar{\mathbf{d}}_{j}-\bar{\mathbf{d}}_{0}\right)}
\end{equation}
and
\begin{equation}\label{tku}
    k=0,1,\cdots,n_\tau,\qquad t_k(t_u)=\left(1-\beta_k\right)t_s+\beta_kt_u
\end{equation}
for   $k=0,\cdots,n_\tau$, where $t_k$ is when the center of the containment ball $\mathcal{S}$ reaches desired intermediate position $\bar{\mathbf{d}}_{k}$.} Given $\mathbf{y}_{L,HT}\left(t_s\right)=\bar{\mathbf{y}}_{L,h,0}$, $\mathbf{y}_{L,HT}\left(t_u\right)=\bar{\mathbf{y}}_{L,h,n_\tau}$, Section \ref{Planning2} decomposes the homogeneous deformation coordination to determine 
the intermediate configurations of the leaders {\color{black}that are denoted by $\bar{\mathbf{y}}_{L,HT,1}$, $\cdots$, $\bar{\mathbf{y}}_{L,HT,n_\tau-1}$}. 

\textbf{Step 3: Assigning Leaders' Desired Trajectories:} By expressing $\bar{\mathbf{d}}=\begin{bmatrix}
\bar{d}_{x,k}&\bar{d}_{y,k}&\bar{d}_{z,k}
\end{bmatrix}^T$ for $k=0,1,\cdots,n_\tau$, $z$ components of the leaders' desired trajectories are {\color{black}the same at anytime $t\in \left[t_s,t_u\right]$}, and defined by
\begin{equation}\label{zcomponent}
    \forall i\in \mathcal{V}_L,\qquad z_{i,HT}=\bar{d}_{z,k}\left(1-\gamma(t,T_k)\right)+\bar{d}_{z,k+1}\gamma\left(t,T_k\right)
\end{equation}
at any time $t\in \left[t_k,t_{k+1}\right]$ for $k=0,\cdots,n_\tau-1$, where $T_k=t_{k+1}-t_k$, and
\begin{equation}\label{gamma}
    \gamma(t,T_k)=6\left({t-t_k\over t_{k+1}-t_k}\right)^5-15\left({t-t_k\over t_{k+1}-t_k}\right)^4+10\left({t-t_k\over t_{k+1}-t_k}\right)^3
\end{equation}
for $t\in \left[t_k,t_{k+1}\right]$. 
Note that $\gamma(t_k)=0$, $\gamma_{k+1}=1$, $\dot{\gamma}\left(t_k\right)=\dot{\gamma}\left(t_{k+1}\right)=0$, and $\ddot{\gamma}\left(t_k\right)=\ddot{\gamma}\left(t_{k+1}\right)=0$.

The $x$ and $y$ components of the desired trajectories of leaders are governed by dynamics
\begin{equation}\label{maindynamicsss}
\dot{\mathbf{x}}_L=\mathbf{A}_L{\mathbf{x}}_L+\mathbf{B}_L{\mathbf{u}}_L,
\end{equation}
where ${\mathbf{u}}_L\in \mathbf{R}^{9\times 1}$ is the input vector, and 
\begin{subequations}
\begin{equation}
\mathbf{x}_L(t)=\left(\mathbf{I}_2\otimes\mathbf{O}\right)\begin{bmatrix}
\mathbf{y}_{L,HT}(t)\\\dot{\mathbf{y}}_{L,HT}(t)
\end{bmatrix}
^T\in \mathbb{R}^{12\times 1}
\end{equation}
\begin{equation}
    \mathbf{A}_L=\begin{bmatrix}
    \mathbf{0}_{6\times 6}&\mathbf{I}_{6}\\
    \mathbf{0}_{6\times 6}&\mathbf{0}_{6\times 6}\\
    \end{bmatrix}
    ,
\end{equation}
\begin{equation}
    \mathbf{B}_L=\begin{bmatrix}
    \mathbf{0}_{6\times 6}\\
    \mathbf{I}_{6}\\
    \end{bmatrix}
    ,
\end{equation}
\end{subequations}
$\mathbf{0}_{6\times 6}\in \mathbb{R}^{6\times 6}$ is a zero-entry matrix, and $\mathbf{I}_6\in \mathbb{R}^{6\times 6}$ is an identity matrix. 
Control input $\mathbf{u}_L\in \mathbb{R}^{6\times 1}$ is optimized by minimizing cost function
\begin{equation}
   \min \mathrm{J}(\mathbf{u}_L,t_u)=\min {1\over 2}\sum_{k=0}^{n_\tau-1} \left(\int_{t_k(t_u)}^{t_{k+1}(t_u)}\mathbf{u}_L^T\left(\tau\right)\mathbf{u}_L\left(\tau\right)d\tau\right)
\end{equation}
subject to dynamics \eqref{maindynamicsss}, safety conditions \eqref{c1}-\eqref{c4}, and boundary conditions
\begin{equation}
    \bigwedge_{k=0}^{n_\tau}\left(\mathbf{x}_{L}(t_k)=\bar{\mathbf{x}}_{L,k}\right).
\end{equation}

A desired continuum deformation coordination, planned by the leader quadcopters, is acquired by followers in a decentralized fashion using the protocol developed in Refs. \cite{rastgoftar2020scalable, rastgoftar2020fault}. This protocol is discussed in Section  \ref{Continuum Deformation Acquisition}.




\section{Continuum Deformation Planning}\label{Continuum Deformation Planning}
The desired configuration of the MQS is defined by {\color{black}affine transformation}
\begin{equation}\label{globaldesiredcoordination}
i\in \mathcal{V},\qquad     \mathbf{p}_i\left(t\right)=\mathbf{Q}\left(t\right){\mathbf{p}}_{i,0}+\mathbf{s}\left(t\right),
\end{equation}
at time $t\in \left[t_s,t_u\right]$, where $\mathbf{p}_i(t)=\begin{bmatrix}
x_{i,HT}(t)&y_{i,HT}(t)&z_{i,HT}(t)
\end{bmatrix}^T\in \mathbb{R}^3$ is the desired position of quadcopter $i\in \mathcal{V}$, $\mathbf{p}_{i,0}$ is the reference position of quadcopter $i\in \mathcal{V}$, and  $\mathbf{s}(t)=\begin{bmatrix}
s_x(t)&s_y(t)&s_z(t)
\end{bmatrix}^T$ is the rigid body displacement vector.
Also, Jacobian matrix $\mathbf{Q}=\left[Q_{ij}\right]\in \mathbb{R}^{3\times 3}$ given by
\begin{equation}
    \mathbf{Q}\left(t\right)=\begin{bmatrix}
    \mathbf{Q}_{xy}(t)&\mathbf{0}_{2\times 1}\\
    \mathbf{0}_{1\times 2}&1
    \end{bmatrix}
\end{equation}
is non-singular at any time $t\in \left[t_s,t_u\right]${\color{black},} where $\mathbf{Q}_{xy}(t)\in \mathbb{R}^{2\times 2}$ specifies the deformation of the leading triangle, defined by the three leaders. Because $Q_{31}=Q_{32}=Q_{13}=Q_{23}=0$, the leading triangle lies in the horizontal plane at any time $t\in \left(t_s,t_u\right]$, if the $z$ components of desired positions of the leaders are all identical at the initial time $t_s$. 
\begin{assumption}\label{assmq1}
This paper assumes that $\mathbf{Q}(t_s)=\mathbf{I}_3$. Therefore, initial and reference positions of quadcopter $i\in \mathcal{V}$ are related by
\begin{equation}
    \mathbf{p}_i\left(t_s\right)=\mathbf{p}_{i,0}+\bar{\mathbf{d}}_s.
\end{equation}
\end{assumption}

The global desired trajectory of quadcopter $i\in \mathcal{V}${\color{black}, defined by affine transformation \eqref{globaldesiredcoordination}, can be expressed} by
\begin{equation}\label{form2}
    \mathbf{p}_i(t)=\left(\mathbf{I}_3\otimes \mathbf{\Omega}_2^T\left(\mathbf{p}_{1,0},\mathbf{p}_{2,0},\mathbf{p}_{3,0},\mathbf{p}_{i,0}\right)\right)\mathbf{y}_{L,HT}{\color{black}\left(t\right)},
\end{equation}
where $\mathbf{\Omega}_2\left(\mathbf{p}_{1,0},\mathbf{p}_{2,0},\mathbf{p}_{3,0},\mathbf{p}_{i,0}\right)\in \mathbb{R}^{3\times 1}$ is defined based on reference positions of leaders $1$, $2$, and $3${\color{black}, as well as} quadcopter $i\in \mathcal{V}$ by
\begin{equation}
    \mathbf{\Omega}_2\left(\mathbf{p}_{1,0},\mathbf{p}_{2,0},\mathbf{p}_{3,0},\mathbf{p}_{i,0}\right)=\begin{bmatrix}
    x_{1,0}&x_{2,0}&x_{3,0}\\
    y_{1,0}&y_{2,0}&y_{3,0}\\
    1&1&1
    \end{bmatrix}
    ^{-1}
    \begin{bmatrix}
    x_{i,0}\\
    y_{i,0}\\
    1
    \end{bmatrix}
    .
\end{equation}
Note that sum of the entries of vector $\mathbf{\Omega}_2\left(\mathbf{p}_{1,0},\mathbf{p}_{2,0},\mathbf{p}_{3,0},\mathbf{p}_{i,0}\right)$ {\color{black}is $1$} for arbitrary {\color{black}vectors} $\mathbf{p}_{1,0}$, $\mathbf{p}_{2,0}$, $\mathbf{p}_{3,0}$, and $\mathbf{p}_{i,0}$, distributed in the $x-y$ plane, if  $\mathbf{p}_{1,0}$, $\mathbf{p}_{2,0}$, $\mathbf{p}_{3,0}$ form a triangle.
\begin{remark}
By using Eq. \eqref{form2}, followers' global desired positions can be expressed based on leaders' global desired positions using relation \eqref{followerleaderdesiredrelation}, where
\begin{equation}
    \mathbf{H}=\mathbf{I}_3\otimes \begin{bmatrix}
     \mathbf{\Omega}_2^T\left(\mathbf{p}_{1,0},\mathbf{p}_{2,0},\mathbf{p}_{3,0},\mathbf{p}_{4,0}\right)\\
     \vdots\\
     \mathbf{\Omega}_2^T\left(\mathbf{p}_{1,0},\mathbf{p}_{2,0},\mathbf{p}_{3,0},\mathbf{p}_{N,0}\right)\\
     \end{bmatrix}
     \in \mathbb{R}^{3\left(N-3\right)\times 9}
\end{equation}
is constant and determined based on reference positions of {\color{black}the} MQS.
\end{remark}

\begin{remark}
Eq. \eqref{globaldesiredcoordination} is used for eigen-decomposition, safety analysis, and planning of the desired continuum deformation coordination. On the other hand, Eq. \eqref{form2} is used in Section \ref{Communication-Based Guidance Protocol} to define the MQS continuum as a decentralized leader-follower problem and ensure the boundedness of the trajectory tracking controllers {\color{black}that are} independently planned by individual {\color{black}quadcopeters}.
\end{remark}
\begin{theorem}\label{thm1}
Assume that three leader quadcopters $1$, $2$, and $3$ remain non-aligned at any time $t\in [t_s,t_u]$. Then, the desired configuration of {\color{black}the} leaders at time $t\in \left[t_s,t_u\right]$, defined by $\mathbf{y}_{L,HT}(t)$, is related to the leaders' initial configuration, defined by $\bar{\mathbf{y}}_{L,HT,0}$, and the rigid body displacement vector $\mathbf{s}(t)$ by
\begin{equation}
    \mathbf{y}_{L,HT}(t)=\mathbf{D}\left(\mathbf{I}_3\otimes \mathbf{Q}(t)\right)\mathbf{D}\bar{\mathbf{y}}_{L,HT,0}
   +\mathbf{D}\left(\mathbf{1}_{3\times 1}\otimes \mathbf{s}(t)\right),
\end{equation}
where $\otimes$ is the Kronecker product symbol and $\mathbf{D}\in \mathbb{R}^{9\times 9}$ is an involutory matrix defined as follows:
\begin{equation}
    D_{ij}=\begin{cases}
        1&i=1,2,3,~j=3(i-1)+1\\
        1&i=4,5,6,~j=3(i-1)+2\\
        1&i=7,6,9,~j=3i\\
    \end{cases}
    .
\end{equation}
 Also, elements of matris $\mathbf{Q}_{xy}{\color{black}\left(t\right)}$
and rigid-body displacement vector $\mathbf{s}(t)$ can be related to $\mathbf{y}_{L,HT}(t)$ by 
\begin{subequations}
\begin{equation}\label{qelement}
Q_{11}(t)=\mathbf{E}_1\mathbf{\Gamma}\mathbf{O}\mathbf{y}_{L,HT}(t),
\end{equation}
\begin{equation}\label{qelement}
Q_{12}(t)=\mathbf{E}_2\mathbf{\Gamma}\mathbf{O}\mathbf{y}_{L,HT}(t),
\end{equation}
\begin{equation}\label{qelement}
Q_{21}(t)=\mathbf{E}_3\mathbf{\Gamma}\mathbf{O}\mathbf{y}_{L,HT}(t),
\end{equation}
\begin{equation}\label{qelement}
Q_{22}(t)=\mathbf{E}_4\mathbf{\Gamma}\mathbf{O}\mathbf{y}_{L,HT}(t),
\end{equation}
\begin{equation}\label{dt}
\mathbf{s}(t)=\begin{bmatrix}
\mathbf{E}_5\mathbf{\Gamma}\mathbf{O}\\
\mathbf{E}_6
\end{bmatrix}
\mathbf{y}_{L,HT}(t),
\end{equation}
\end{subequations}
{\color{black}at any time $t\in \left[t_s,t_u\right]$,} where $\mathbf{E}_1=\begin{bmatrix}
1&\mathbf{0}_{1\times 5}
\end{bmatrix}$, $\mathbf{E}_2=\begin{bmatrix}
0&1&\mathbf{0}_{1\times 4}
\end{bmatrix}$, $\mathbf{E}_3=\begin{bmatrix}
\mathbf{0}_{1\times 2}&1&\mathbf{0}_{1\times 3}
\end{bmatrix}$, $\mathbf{E}_4=\begin{bmatrix}
\mathbf{0}_{1\times 3}&1&\mathbf{0}_{1\times 2}
\end{bmatrix}$, $\mathbf{E}_5=\begin{bmatrix}
\mathbf{0}_{2\times 4}&\mathbf{I}_2
\end{bmatrix}$, $\mathbf{E}_6={1\over 3}\begin{bmatrix}
\mathbf{0}_{1\times 6}&\mathbf{1}_{1\times 3}
\end{bmatrix}\in \mathbb{R}^{3\times 9}$, and
\[
\mathbf{\Gamma}=\begin{bmatrix}
    x_{1,0}&y_{1,0}&0&0&1&0\\
    x_{2,0}&y_{2,0}&0&0&1&0\\
    x_{3,0}&y_{3,0}&0&0&1&0\\
    0&0&x_{1,0}&y_{1,0}&0&1\\
    0&0&x_{2,0}&y_{2,0}&0&1\\
    0&0&x_{3,0}&y_{3,0}&0&1\\
    \end{bmatrix}
    ^{-1}
    .
\]



\end{theorem}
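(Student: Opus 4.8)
The plan is to obtain both assertions directly from the affine relation \eqref{globaldesiredcoordination}, specialized to the three leaders, while carefully tracking the two natural orderings of a stacked vector of $3$-D positions. For the first identity I would begin by observing that $\mathbf{D}$ is precisely the commutation (permutation) matrix that converts the coordinate-stacked ordering $[x_1,x_2,x_3,y_1,y_2,y_3,z_1,z_2,z_3]^T$ used to define $\mathbf{y}_{L,HT}$ into the agent-stacked ordering $[\mathbf{p}_1^T,\mathbf{p}_2^T,\mathbf{p}_3^T]^T$; reading off its nonzero pattern shows it is a product of disjoint transpositions, so $\mathbf{D}^2=\mathbf{I}_9$ and $\mathbf{D}$ is involutory. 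Then $\mathbf{D}\bar{\mathbf{y}}_{L,HT,0}$ places the leaders' configuration in agent-stacked form, the block-diagonal map $\mathbf{I}_3\otimes\mathbf{Q}(t)$ applies $\mathbf{Q}(t)$ to each leader position, and the outer $\mathbf{D}$ returns to coordinate-stacked form; likewise $\mathbf{D}\left(\mathbf{1}_{3\times1}\otimes\mathbf{s}(t)\right)$ is the coordinate-stacked replication of $\mathbf{s}(t)$ over the three leaders. Matching coordinates then recovers $\mathbf{p}_i(t)=\mathbf{Q}(t)\mathbf{p}_{i,0}+\mathbf{s}(t)$ for $i\in\mathcal{V}_L$, which is the claim; equivalently, the middle step is the Kronecker identity $\mathbf{D}(\mathbf{I}_3\otimes\mathbf{Q})\mathbf{D}=\mathbf{Q}\otimes\mathbf{I}_3$ together with $\mathrm{vec}(\mathbf{A}\mathbf{X})=(\mathbf{I}\otimes\mathbf{A})\mathrm{vec}(\mathbf{X})$. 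Here I would invoke Assumption~\ref{assmq1}, $\mathbf{Q}(t_s)=\mathbf{I}_3$, to pin down which configuration $\bar{\mathbf{y}}_{L,HT,0}$ encodes (equivalently, to absorb the initial rigid offset $\bar{\mathbf{d}}_s$ into this term).

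For the component-recovery formulas, I would write, for each leader $i\in\{1,2,3\}$, the $x$- and $y$-rows of \eqref{globaldesiredcoordination},
\[
x_{i,HT}=Q_{11}x_{i,0}+Q_{12}y_{i,0}+s_x,\qquad y_{i,HT}=Q_{21}x_{i,0}+Q_{22}y_{i,0}+s_y,
\]
and stack the six scalar equations. With the unknowns ordered as $\bigl[Q_{11},Q_{12},Q_{21},Q_{22},s_x,s_y\bigr]^T$ and the left-hand sides collected as $\mathbf{O}\mathbf{y}_{L,HT}(t)$ (the selector $\mathbf{O}=[\mathbf{I}_6\ \mathbf{0}_{6\times3}]$ discards the three $z$-entries), this stacked system reads exactly $\mathbf{\Gamma}^{-1}\bigl[Q_{11},Q_{12},Q_{21},Q_{22},s_x,s_y\bigr]^T=\mathbf{O}\mathbf{y}_{L,HT}(t)$, with $\mathbf{\Gamma}^{-1}$ the matrix displayed in the statement. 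Inverting gives $\bigl[Q_{11},Q_{12},Q_{21},Q_{22},s_x,s_y\bigr]^T=\mathbf{\Gamma}\mathbf{O}\mathbf{y}_{L,HT}(t)$, and the row selectors $\mathbf{E}_1,\dots,\mathbf{E}_5$ then read off \eqref{qelement} and the first two components of \eqref{dt}. For $s_z$, I would note that $z_{i,0}=0$ together with the block form of $\mathbf{Q}$ force $z_{i,HT}(t)=s_z(t)$ for every leader, so averaging the last three entries of $\mathbf{y}_{L,HT}(t)$ --- which is precisely $\mathbf{E}_6\mathbf{y}_{L,HT}(t)$ --- returns $s_z(t)$, completing \eqref{dt}.

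The only step that needs genuine justification rather than bookkeeping is the invertibility of $\mathbf{\Gamma}^{-1}$ (so that $\mathbf{\Gamma}$ is well defined): after a symmetric permutation of its rows and columns it becomes block-diagonal with two identical $3\times3$ blocks $\bigl[\begin{smallmatrix}x_{1,0}&y_{1,0}&1\\ x_{2,0}&y_{2,0}&1\\ x_{3,0}&y_{3,0}&1\end{smallmatrix}\bigr]$, whose determinant equals twice the signed area of the triangle formed by the leaders' reference positions. This is nonzero exactly when leaders $1,2,3$ are non-aligned, which is the hypothesis of the theorem (and it is the same $3\times3$ matrix that already appears in the definition of $\mathbf{\Omega}_2$), so the inversion is legitimate throughout $[t_s,t_u]$. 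Everything else is substitution into the vectorization/Kronecker identities and coordinate matching, so I expect the commutation-matrix property of $\mathbf{D}$ and this determinant--area argument to be the only points worth spelling out in detail.
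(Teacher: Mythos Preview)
Your proposal is correct and follows essentially the same route as the paper: identify $\mathbf{D}$ as the permutation between agent-stacked and coordinate-stacked orderings (hence involutory), stack the leader instances of \eqref{globaldesiredcoordination} to get the Kronecker form, and then read off $Q_{11},Q_{12},Q_{21},Q_{22},s_x,s_y$ by inverting the $6\times6$ coefficient matrix that defines $\mathbf{\Gamma}$. Your write-up is in fact more thorough than the paper's---you give an explicit block-diagonal/area argument for invertibility (the paper just cites this fact) and you spell out the $s_z$ recovery via $\mathbf{E}_6$, which the paper omits; one small point is that $\mathbf{\Gamma}$ depends only on the fixed reference positions $\mathbf{p}_{i,0}$, so its invertibility is a single static condition rather than something that must hold ``throughout $[t_s,t_u]$''.
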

\begin{proof}
Vectors $\mathbf{y}_{L,HT}{\color{black}\left(t\right)}$ and $\bar{\mathbf{y}}_{L,HT,0}$ can be expressed by $\mathbf{y}_{L,HT}(t)=\mathbf{D}\begin{bmatrix}
\mathbf{p}_1^T(t)&\mathbf{p}_2^T(t)&\mathbf{p}_3^T(t)
\end{bmatrix}^T$ and $\bar{\mathbf{y}}_{L,HT,0}=\mathbf{D}\begin{bmatrix}
\mathbf{p}_{1,0}&\mathbf{p}_{2,0}&\mathbf{p}_{3,0}
\end{bmatrix}^T$, respectively. By provoking Eq. \eqref{globaldesiredcoordination},
we can write
\begin{equation}\label{prooofffff2}
    \begin{bmatrix}
\mathbf{p}_1(t)\\
\mathbf{p}_2(t)\\
\mathbf{p}_3(t)
\end{bmatrix}
=\left(\mathbf{I}_3\otimes \mathbf{Q}(t)\right)
\begin{bmatrix}
\mathbf{p}_{1,0}\\
\mathbf{p}_{2,0}\\
\mathbf{p}_{3,0}
\end{bmatrix}
+\mathbf{1}_{3\times 1}\otimes \mathbf{s}(t){\color{black},}
\end{equation}
{\color{black}and} Eq. \eqref{prooofffff2} can be rewritten as follows:
\begin{equation}\label{proofeq1}
    \mathbf{D}\mathbf{y}_{L,HT}(t)=\mathbf{I}_3\otimes  \mathbf{D}\bar{\mathbf{y}}_{L,HT,0}+\mathbf{1}_{3\times 1}\otimes \mathbf{d}(t).
\end{equation}
Because  $\mathbf{D}$ is involutory, $\mathbf{D}=\mathbf{D}^{-1}$ and Eq. \eqref{globaldesiredcoordination} can be obtained by pre-multiplying $\mathbf{D}$ on both sides of Eq. \eqref{prooofffff2}. By replacing $\mathbf{p}_i(t)$ and $\mathbf{p}_{i,0}$ by {\color{black}$\begin{bmatrix}
x_{i,HT}(t)&y_{i,HT}(t)&z_{i,HT}(t)
\end{bmatrix}^T$ and $\begin{bmatrix}
x_{i,0}&y_{i,0}(t)&0
\end{bmatrix}^T$ into Eq. \eqref{globaldesiredcoordination} for every leader $i\in \mathcal{V}_L$, elements of $\mathbf{Q}_{xy}(t)$, denoted by $Q_{11}(t)$, $Q_{12}(t)$, $Q_{21}(t)$, and $Q_{22}(t)$, and $x$ and element of $\mathbf{s}(t)$, denoted by $s_x(t)$, and $s_y(t)$,} can be related to the $x$ and $y$ components of the leaders' desired positions 
\[
\begin{bmatrix}
Q_{11}&Q_{12}&Q_{21}&Q_{22}&s_x&s_y
\end{bmatrix}
^T
=\mathbf{\Gamma}\mathbf{O}\mathbf{y}_{L,HT},
\]
{\color{black}at any time $t\in \left[t_s,t_u\right]$,} where 
\[
\mathbf{O}\mathbf{y}_{L,HT}=\begin{bmatrix}
x_{1,HT}&x_{2,HT}&x_{3,HT}&y_{1,HT}&y_{2,HT}&y_{3,HT}
\end{bmatrix}^T.
\]
Note that matrix $\mathbf{\Gamma}$ is non-singular{\color{black},} if leaders are non-aligned at the initial time $t_s$ \cite{rastgoftar2020scalable}.
\end{proof}
Theorem \ref{thm1} is used in Section \ref{Planning1} to obtain the final location of the center of the containment ball, denoted by $\bar{\mathbf{d}}_u$, where $\bar{\mathbf{d}}_u$ is one of the inputs of the A* solver (See Algorithm \ref{euclid}). In particular, $\bar{\mathbf{d}}_u=\mathbf{s}\left(t_u\right)$ is obtained by Eq. \eqref{dt}{\color{black}, if} $\mathbf{y}_{L,HT}(t)$ is substituted by $\bar{\mathbf{y}}_{L,HT,n_\tau}=\mathbf{y}_{L,HT}(t_u)$ on the right-hand side of Eq. \eqref{dt}. In addition, Section \ref{Planning2} uses Theorem \ref{thm1} to assign the intermediate formations of the leader team.
    



\subsection{A* Search Planning}
\label{Planning1}
The A* search method is used to safely plan the coordination of the containment disk $\mathcal{S}$ by optimizing the intermediate {\color{black}locations of the center of the containment ball, denoted by} $\bar{\mathbf{d}}_1$ through $\bar{\mathbf{d}}_{n_\tau-1}$, {\color{black}for} given $\bar{\mathbf{d}}_s$ and $\bar{\mathbf{d}}_{n_\tau}$, {\color{black}where geometry of obstacles is known in the coordination space}. We first develop an algorithm for collision avoidance of the MQS with obstacles in Section \ref{Obstacle Collision Avoidance}. This algorithm is used by the A* optimizer to determine $\bar{\mathbf{d}}_1$ through $\bar{\mathbf{d}}_{n_\tau-1}$, as described in Section  \ref{A* Optimizer Functionality}.
\begin{definition}
Let $i-j-k-l$ be an arbitrary tetrahedron whose vertices are positioned as  $\mathbf{p}_i=\begin{bmatrix}
x_i&y_i&z_i
\end{bmatrix}^T$, $\mathbf{p}_j=\begin{bmatrix}
x_j&y_j&z_j
\end{bmatrix}^T$, $\mathbf{p}_k=\begin{bmatrix}
x_k&y_k&z_k
\end{bmatrix}^T$, and $\mathbf{p}_l=\begin{bmatrix}
x_l&y_l&z_l
\end{bmatrix}^T$ is a $3$-D coordination space. Also, $\mathbf{p}_f=\begin{bmatrix}
x_f&y_f&z_f
\end{bmatrix}^T$ is the position of an arbitrary point $f$ in the coordination space. Then,
\begin{equation}
\mathbf{\Omega}_3\left(\mathbf{p}_i,\mathbf{p}_j,\mathbf{p}_k,\mathbf{p}_l,\mathbf{p}_f\right)=\begin{bmatrix}
\mathbf{p}_i&\mathbf{p}_j&\mathbf{p}_k&\mathbf{p}_l\\
1&1&1&1
\end{bmatrix}
^{-1}
\begin{bmatrix}
\mathbf{p}_f\\
1
\end{bmatrix}
\end{equation}
is a finite vector with the entries summing up to $1$ \cite{rastgoftar2020scalable}.
\end{definition}
The vector function $\mathbf{\Omega}_3$ is used in Section \ref{Obstacle Collision Avoidance} to {\color{black}specify collision avoidance condition.}
\subsubsection{Obstacle Collision Avoidance}
\label{Obstacle Collision Avoidance}
We enclose obstacles by a finite number of polytopes identified by set ${\color{black}\mathcal{H}}=\left\{1,\cdots,M\right\}$, where
$
    \mathcal{P}=\bigcup_{j\in {\color{black}\mathcal{H}}}\mathcal{P}_j
$
defines vertices of polytopes containing obstacles in the motion space, and $\mathcal{P}_j$ is a finite set defining identification numbers of vertices {\color{black}of} polytope $j\in \mathcal{O}$ containing the  $j-th$ obstacle in the motion space. Polytope $\mathcal{P}_j$ is made of $m_j$ distinct tetrahedral cells, where $\mathcal{T}_{j,l}$ defines the identification numbers of the nodes of the $l$-th tetrahedral cell ($l=1,\cdots,m_j$). Therefore, $\mathcal{P}$ can be expressed as follows:
\begin{equation}
    \mathcal{P}=\bigwedge_{j\in \mathcal{P}}\bigwedge_{l=1}^{m_j}\mathcal{T}_{j,l}.
\end{equation}

\begin{definition}
We say ${\mathbf{d}}$ is a valid position for the center of the containment ball $\mathcal{S}$ with radius $r_{\mathrm{max}}$, if the following two conditions are satisfied:
\begin{subequations}
\begin{equation}\label{firstcontainmentcond}
    \bigwedge_{j\in \mathcal{P}}\bigwedge_{l=1}^{m_j}\bigwedge_{p\in \mathcal{T}_{j,l}}\left(\left(x_p,y_p,z_p\right)\notin \mathcal{S}\left({\mathbf{d}},r_{\mathrm{max}}\right)\right),
\end{equation}
\begin{equation}\label{secondcontainmentcond}
\forall \mathbf{r}\in \partial \mathcal{S},\qquad     \bigwedge_{j\in \mathcal{P}}\bigwedge_{l=1}^{m_j}\bigwedge_{\mathcal{T}_{j,l}=\left\{v_1,\cdots,v_4\right\}}\left(\mathbf{\Omega}_3\left(\mathbf{p}_{v_1},\mathbf{p}_{v_2},\mathbf{p}_{v_3},\mathbf{p}_{v_4},\mathbf{r}\right)\not\ge\mathbf{0}\right),
\end{equation}
\end{subequations}
where $\partial \mathcal{S}\left({\mathbf{d}},r_{\mathrm{max}}\right)$  is the boundary of the containment ball. In Eq. \eqref{firstcontainmentcond}, $p\in \mathcal{T}_{j,l}$ is the index number of {\color{black}one of the nodes of} tetrahedron $\mathcal{T}_{j,l}$ {\color{black}that is} positioned at $\left(x_p,y_p,z_p\right)$ for $j\in \mathcal{P}$ and $l=1,\cdots,m_j$. In Eq. \eqref{secondcontainmentcond},  $\mathbf{p}_{v_1}$, $\mathbf{p}_{v_2}$, $\mathbf{p}_{v_3}$, and $\mathbf{p}_{v_4}$ denote positions of vertices $v_1$, $v_2$, $v_3$, and $v_4$ of tetrahedron $\mathcal{T}_{j,l}$ for $j\in \mathcal{P}$ and $l=1,\cdots,m_j$. 
\end{definition}
The constraint  equation \eqref{firstcontainmentcond} ensures that vertices of the containment polytopes are all outside the ball $\mathcal{S}$. Also, condition \eqref{secondcontainmentcond} requires that the center of the containment ball is outside of all polytopes defined by $\mathcal{P}$.

\begin{remark}
The safety condition \eqref{firstcontainmentcond} is necessary but not sufficient for ensuring of the MQS collision avoidance with obstacles. Fig. \ref{obstacleavoidance} illustrates a situation in which collision is not avoided because the safety condition \eqref{secondcontainmentcond} is violated while \eqref{firstcontainmentcond} is satisfied.  More specifically, Fig. \ref{obstacleavoidance} shows that vertices of a tetrahedron enclosing an obstacle are outside of containment ball $\mathcal{S}$, where $\mathcal{S}$ contains the MQS. However, the containment ball enclosing the MQS is contained by the tetrahedron representing obstacle in the motion space.
\end{remark}
\begin{figure}[htb]
\centering
\includegraphics[width=3.3  in]{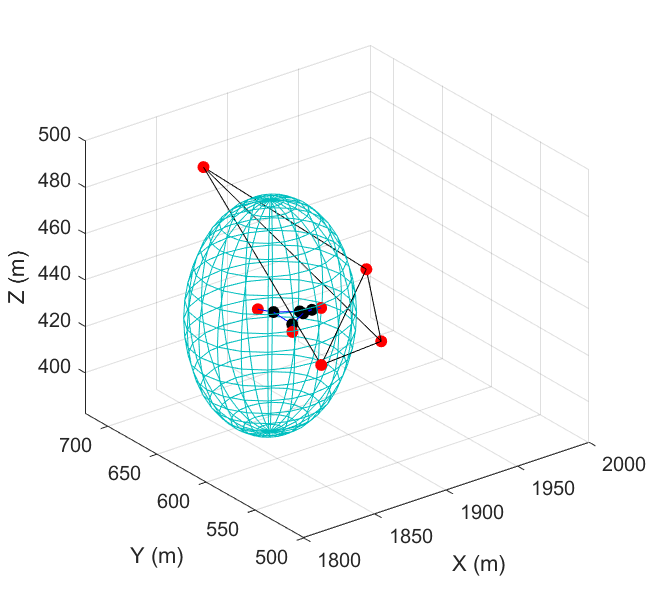}
\caption{Violation of collision avoidance requirements: MQS leaders are contained by the containment ball while the tetrahedron, representing an obstacle,  encloses the containment ball in the motion space. }
\label{obstacleavoidance}
\end{figure}

\subsubsection{A* Optimizer Functionality}
\label{A* Optimizer Functionality}
To plan the desired coordination of the MQS, we represent the coordination space by a finite number of nodes obtained by uniform discretization of the motion space. Let $\mathcal{D}_x=\left\{\Delta x,2\Delta x,\cdots,n_x\Delta x\right\}$, $\mathcal{D}_y=\left\{\Delta y,2\Delta y,\cdots,n_y\Delta x\right\}$, and $\mathcal{D}_z=\left\{\Delta z,2\Delta z,\cdots,n_z\Delta z\right\}$ define all possible discrete values for the $x$, $y$, and $z$ components of the nodes distributed in the motion space. Then, 
\begin{equation}
\resizebox{0.99\hsize}{!}{%
$
    \mathcal{D}=\left\{\tilde{\mathbf{d}}=\left(\tilde{d}_x\Delta x,\tilde{d}_y \Delta y,\tilde{d}_z \Delta z\right):\tilde{d}_x\Delta x\in \mathcal{D}_x,\tilde{d}_y\Delta y\in \mathcal{D}_y,\tilde{d}_z\Delta z\in \mathcal{D}_z\right\}
$
}
\end{equation}
defines positions of the nodes in the motion space. 
\begin{assumption}
The containment polytopes {\color{black}enclosing obstacles} are defined such that $\mathcal{P}\subset \mathcal{D}$. 
\end{assumption}

\begin{definition}
We define
\begin{equation}
\begin{split}
    \mathcal{F}=&\bigg\{\tilde{\mathbf{d}}\in \mathcal{D}: \left(\bigwedge_{j\in \mathcal{P}}\bigwedge_{l=1}^{m_j}\bigwedge_{p\in \mathcal{T}_{j,l}}\left(\left(x_p,y_p,z_p\right)\notin \mathcal{S}\left(\tilde{\mathbf{d}},r_{\mathrm{max}}\right)\right)\right)\wedge\\
    &\left(\bigwedge_{j\in \mathcal{P}}\bigwedge_{l=1}^{m_j}\bigwedge_{\mathcal{T}_{j,l}=\left\{v_1,\cdots,v_4\right\}}\left(\mathbf{\Omega}_3\left(\mathbf{p}_{v_1},\mathbf{p}_{v_2},\mathbf{p}_{v_3},\mathbf{p}_{v_4},\mathbf{r}\right)\not\ge\mathbf{0}\right)\right),~\\
    &\mathrm{for~}\mathbf{r}\in\partial \mathcal{S}\left(\tilde{\mathbf{d}},r_{\mathrm{max}}\right)\bigg\}\subset \mathcal{D}
\end{split}
\end{equation}
as the set of valid positions for the center of ball $\mathcal{S}$.
\end{definition}
\begin{assumption}
Initial and final positions of the containment ball are defined such that $\bar{\mathbf{d}}_s\in \mathcal{F}$ and $\bar{\mathbf{d}}_{n_\tau}\in \mathcal{F}$.
\end{assumption}
\begin{definition}
Set
\begin{equation}
\resizebox{0.99\hsize}{!}{%
$
    \mathcal{A}\left(\tilde{\mathbf{d}}\right)=\left\{\left(\tilde{\mathbf{d}}+\left(h_x\Delta_x,h_y\Delta_y,h_z\Delta_z\right)\right)\in \mathcal{F}:h_x,h_y,h_z\in \{-1,0,1\}\right\}
$
}
\end{equation}
defines all possible valid neighboring points of point $\tilde{\mathbf{d}}\in \mathcal{F}$.
\end{definition}
\begin{definition}
 For every $\tilde{\mathbf{d}}\in \mathcal{F}$, the straight line distance
 \begin{equation}
     C_H\left(\tilde{\mathbf{d}},\bar{\mathbf{d}}_u\right)=\|\tilde{\mathbf{d}}-\bar{\mathbf{d}}_{u}\|
 \end{equation}
 is considered as the heuristic {\color{black}cost} of position vector $\tilde{\mathbf{d}}\in \mathcal{F}$.
\end{definition}
\begin{definition}
 For every $\tilde{\mathbf{d}}\in \mathcal{F}$ and $\tilde{\mathbf{d}}'\in \mathcal{A}\left(\tilde{\mathbf{d}}\right)$,  \begin{equation}
     C_O\left(\tilde{\mathbf{d}},\tilde{\mathbf{d}}'\right)=\|\tilde{\mathbf{d}}-\tilde{\mathbf{d}}'\|
 \end{equation}
 is the operation cost for the movement from $\tilde{\mathbf{d}}\in \mathcal{F}$ towards $\tilde{\mathbf{d}}'\in \mathcal{A}\left(\tilde{\mathbf{d}}\right)$.
\end{definition}

\begin{algorithm}\label{astaralgorithmmm}
  \caption{A* Planning of the MQS Coordination}\label{euclid1}
  \begin{algorithmic}[1]
      \State \textit{Get: $\bar{\mathbf{d}}_s$ and $\bar{\mathbf{d}}_{u}$}
      \State \textit{Define:} Open set $\mathcal{O}=\left\{\bar{\mathbf{d}}_s\right\}$, Closed set $\mathcal{C}=\emptyset$, and $\tilde{\mathbf{d}}_{\mathrm{best}}=\bar{\mathbf{d}}_s$
      \While{$\tilde{\mathbf{d}}_{\mathrm{best}}=\bar{\mathbf{d}}_u$ or $\mathcal{O}\neq \emptyset$}
        \State  $\tilde{\mathbf{d}}_{\mathrm{best}}\leftarrow \argmin\limits_{\tilde{\mathbf{d}}\in \mathcal{F}}\left(g\left(\tilde{\mathbf{d}}\right)+C_H\left(\tilde{\mathbf{d}},\bar{\mathbf{d}}_u\right)\right)$
        \State Update $\mathcal{O}$: $\mathcal{O}\leftarrow \mathcal{O}\setminus \left\{\tilde{\mathbf{d}}_{\mathrm{best}}\right\}$
        \State Update $\mathcal{C}$: $\mathcal{C}\leftarrow \mathcal{C}\bigcup \left\{\tilde{\mathbf{d}}_{\mathrm{best}}\right\}$
        \State Assign $\mathcal{A}\left(\tilde{\mathbf{d}}_{\mathrm{best}}\right)$
        \State $\mathcal{R}\left(\tilde{\mathbf{d}}_{\mathrm{best}}\right)\leftarrow \mathcal{A}\left(\tilde{\mathbf{d}}_{\mathrm{best}}\right)\setminus \left(\mathcal{A}\left(\tilde{\mathbf{d}}_{\mathrm{best}}\right)\bigcap \mathcal{C}\right)$
        \For{\texttt{< every $\tilde{\mathbf{d}}\in \mathcal{R}\left(\tilde{\mathbf{d}}_{\mathrm{best}}\right)$>}}
        \State $\tilde{\mathbf{b}}\left(\tilde{\mathbf{d}}\right)\leftarrow \tilde{\mathbf{d}}$
         \If{$\tilde{\mathbf{d}}\in \mathcal{O}$}           
             \If{$g\left(\tilde{\mathbf{d}}_{\mathrm{best}}\right)+C_O\left(\tilde{\mathbf{d}}_{\mathrm{best}},\tilde{\mathbf{d}}\right)<g\left(\tilde{\mathbf{d}}\right)$} 
             \State $g\left(\tilde{\mathbf{d}}\right)\leftarrow g\left(\tilde{\mathbf{d}}_{\mathrm{best}}\right)+C_O\left(\tilde{\mathbf{d}}_{\mathrm{best}},\tilde{\mathbf{d}}\right)$
             \State $\tilde{\mathbf{b}}\left(\tilde{\mathbf{d}}\right)\leftarrow \tilde{\mathbf{d}}_{\mathrm{best}}$
             \EndIf
         \EndIf
   \EndFor
   \State $\mathcal{O}\leftarrow \mathcal{R}\left(\tilde{\mathbf{d}}_{\mathrm{best}}\right)\bigcup \mathcal{O}$
   \EndWhile
  \end{algorithmic}
\end{algorithm}

 Given initial and final locations of the center of the containment ball $\mathcal{S}$, denoted by $\bar{\mathbf{d}}_s$ and $\bar{\mathbf{d}}_u$, the A* search algorithm is applied to determine optimal intermediate positions $\bar{\mathbf{b}}_s$, $\cdots$, $\bar{\mathbf{b}}_{m_\tau}$ along the optimal path of the containment ball $\mathcal{S}$ from $\bar{\mathbf{d}}_s$ to $\bar{\mathbf{d}}_u$ in an obstacle-laden environment (See Algorithm \ref{euclid1}). More specifically, the A* optimizer generates $\bar{\mathbf{b}}_s$, $\cdots$, $\bar{\mathbf{b}}_{m_\tau}$ by searching over set $\mathcal{F}$, where
 \begin{subequations}
 \begin{equation}
     \bar{\mathbf{b}}_s=\bar{\mathbf{d}}_s,
 \end{equation}
 \begin{equation}
     \bar{\mathbf{b}}_{m_\tau}=\bar{\mathbf{d}}_u,
 \end{equation}
  \begin{equation}
     \left(\bar{\mathbf{b}}_{k},\bar{\mathbf{b}}_{k+1}\right)\in \mathcal{A}\left(\bar{\mathbf{b}}_{k}\right).
 \end{equation}
 \end{subequations}
The center of the containment ball $\mathcal{S}$ moves along the straight paths obtained by connecting $\bar{\mathbf{b}}_s$, $\cdots$, $\bar{\mathbf{b}}_{m_\tau}$. Therefore, $n_\tau $ serially-connected line segments defines the optimal path of the containment ball, where $n_\tau\leq m_\tau$, $\bar{\mathbf{d}}_s=\bar{\mathbf{b}}_s$, $\bar{\mathbf{d}}_{n_\tau}=\bar{\mathbf{b}}_{m_\tau}=\bar{\mathbf{d}}_u$,  and the end point of the $k$-th line segment connects  $\bar{\mathbf{d}}_{k-1}$ to $\bar{\mathbf{d}}_{k}$. Given $\bar{\mathbf{b}}_s$, $\cdots$, $\bar{\mathbf{b}}_{m_\tau}$, algorithm \ref{euclid} is used to determine $\bar{\mathbf{d}}_1$, $\cdots$, $\bar{\mathbf{d}}_{n_\tau-1}$.

\begin{algorithm}\label{astaralgorithmmm}
  \caption{Assignment of Optimal Way-points $\bar{\mathbf{d}}_1$, $\cdots$, $\bar{\mathbf{d}}_{n_\tau-1}$}\label{euclid}
  \begin{algorithmic}[1]
      \State \textit{Get: $\bar{\mathbf{b}}_s=\bar{\mathbf{d}}_s$, $\cdots$, $\bar{\mathbf{b}}_{m_\tau}=\bar{\mathbf{d}}_u$}
      \State \textit{Set: $i=0$}
      \For{\texttt{< $k\leftarrow1$ to $m_\tau-1$ >}}
         \If{$\bar{\mathbf{b}}_k-\bar{\mathbf{b}}_{k-1}\neq\bar{\mathbf{b}}_{k+1}-\bar{\mathbf{b}}_{k}$}  
         \State $i\leftarrow i+1$
  \State $\bar{\mathbf{d}}_i=\bar{\mathbf{b}}_k$
             \EndIf
   \EndFor
  \end{algorithmic}
\end{algorithm}

\subsection{Intermediate Configuration of the Leading Triangle}
\label{Planning2}
Matrix $\mathbf{Q}_{xy}(t)$ can be expressed by
\begin{equation}\label{Qxy}
    \mathbf{Q}_{xy}(t)=\mathbf{R}_{xy}(t)\mathbf{U}_{xy}(t),
\end{equation}
where rotation matrix $\mathbf{R}_{xy}(t)$ and pure deformation matrix $\mathbf{U}_{xy}(t)$ are defined as follows:
\begin{subequations}
\begin{equation}\label{Rxy}
    \mathbf{R}_{xy}(t)=\begin{bmatrix}\cos \theta_r&-\sin \theta_r\\
    \sin\theta_r&\cos \theta_r\end{bmatrix},
\end{equation}
\begin{equation}\label{Uxy}
    \mathbf{U}_{xy}(t)=\mathbf{R}_D(t)\mathbf{\Lambda}(t)\mathbf{R}_D^T(t),
\end{equation}
\end{subequations}
{\color{black}where}
\begin{subequations}
\begin{equation}\label{LAMBDAAAAAAA}
    \mathbf{\Lambda}(t)=\begin{bmatrix}
    \sigma_1(t)&0\\
    0&\sigma_2(t)
    \end{bmatrix}
    ,
\end{equation}
\begin{equation}\label{RDDDDDDDDD}
    \mathbf{R}_D(t)=\begin{bmatrix}\cos \theta_d&-\sin \theta_d\\
    \sin\theta_d&\cos \theta_d\end{bmatrix}.
\end{equation}
\end{subequations}
Note that $\theta_r(t)>0$ and $\theta_d(t)>0$ are the rotation and shear deformation angles; and $\sigma_1(t)$ and $\sigma_2(t)$ are the first and second deformation eigenvalues. 
Because $\mathbf{\Lambda}(t)$ is positive definite and diagonal, matrix  $\mathbf{U}_{xy}(t)$ is positive definite at any time $t\in \left[t_s,t_u\right]$ \cite{rastgoftar2020scalable}. 
\begin{proposition}\label{prop1}
Matrix $\mathbf{U}_{xy}^m$ can be expressed as
\begin{equation}\label{Uxym}
    \mathbf{U}_{xy}^m(t)=\begin{bmatrix}a_m(t)&b_m(t)\\b_m(t)&a_m(t)\end{bmatrix},
\end{equation}
with
\begin{subequations}
\begin{equation}\label{amm}
    a_m(t)=\sigma_1^m(t)\cos^2\theta_d(t)+\sigma_2^m(t)\sin^2\theta_d(t),
\end{equation}
\begin{equation}\label{bmm}
    b_m(t)=\left(\sigma_1^m(t)-\sigma_2^m(t)\right)\sin\theta_d(t)\cos\theta_d(t),
\end{equation}
\begin{equation}\label{cmm}
    c_m(t)=\sigma_1^m(t)\sin^2\theta_d(t)+\sigma_2^m(t)\cos^2\theta_d(t).
\end{equation}
\end{subequations}
Also, $\sigma_1$, $\sigma_2$, and $\theta_d$ can be related to $a_m$, $b_m$, and $c_m$ by
\begin{subequations}
\begin{equation}\label{lambda1}
    \sigma_1(t)=\sqrt[m]{\dfrac{a_m(t)+c_m(t)}{2}+\sqrt{\big[{1\over 2}\left(a_m(t)-c_m(t)\right)\big]^2+b_m^2(t)}},
\end{equation}
\begin{equation}\label{lambda2}
    \sigma_2(t)=\sqrt[m]{\dfrac{a_m(t)+c_m(t)}{2}-\sqrt{\big[{1\over 2}\left(a_m(t)-c_m(t)\right)\big]^2+b_m^2(t)}},
\end{equation}
\begin{equation}\label{thetad}
        \theta_d(t)=\dfrac{1}{2}\tan^{-1}\left(\dfrac{2b_m(t)}{a_m(t)-c_m(t)}\right).
\end{equation}
\end{subequations}
\end{proposition}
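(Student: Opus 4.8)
The plan is to exploit the fact that \eqref{Uxy} is a genuine eigen-decomposition of the symmetric positive-definite matrix $\mathbf{U}_{xy}(t)$. Since $\mathbf{R}_D(t)$ is a rotation matrix it is orthogonal, so $\mathbf{R}_D^T(t)=\mathbf{R}_D^{-1}(t)$, and raising $\mathbf{U}_{xy}(t)=\mathbf{R}_D(t)\mathbf{\Lambda}(t)\mathbf{R}_D^T(t)$ to the $m$-th power makes the interior factors $\mathbf{R}_D^T(t)\mathbf{R}_D(t)$ telescope to the identity:
\begin{equation}
\mathbf{U}_{xy}^m(t)=\mathbf{R}_D(t)\mathbf{\Lambda}^m(t)\mathbf{R}_D^T(t),\qquad \mathbf{\Lambda}^m(t)=\mathrm{diag}\left(\sigma_1^m(t),\sigma_2^m(t)\right),
\end{equation}
the diagonal power being immediate because $\mathbf{\Lambda}(t)$ is diagonal. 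Here $\sigma_1^m$, $\sigma_2^m$ are well defined and positive because $\mathbf{U}_{xy}(t)$, hence $\mathbf{\Lambda}(t)$, is positive definite for every $t\in[t_s,t_u]$. First I would carry out the $2\times2$ product $\mathbf{R}_D\,\mathbf{\Lambda}^m\,\mathbf{R}_D^T$ entrywise using \eqref{LAMBDAAAAAAA}--\eqref{RDDDDDDDDD}; collecting terms gives the $(1,1)$ entry $\sigma_1^m\cos^2\theta_d+\sigma_2^m\sin^2\theta_d=a_m$, the off-diagonal entry $(\sigma_1^m-\sigma_2^m)\sin\theta_d\cos\theta_d=b_m$ (equal in both positions by symmetry), and the $(2,2)$ entry $\sigma_1^m\sin^2\theta_d+\sigma_2^m\cos^2\theta_d=c_m$, which establishes \eqref{Uxym}--\eqref{cmm}.

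For the inverse relations \eqref{lambda1}--\eqref{thetad}, I would combine the three scalar identities just derived. Adding the diagonal entries and using $\cos^2\theta_d+\sin^2\theta_d=1$ yields $a_m+c_m=\sigma_1^m+\sigma_2^m$. Subtracting them and invoking the double-angle identities gives $a_m-c_m=(\sigma_1^m-\sigma_2^m)\cos 2\theta_d$ and $2b_m=(\sigma_1^m-\sigma_2^m)\sin 2\theta_d$; squaring and adding these two eliminates $\theta_d$ and produces $\left[\tfrac12(a_m-c_m)\right]^2+b_m^2=\tfrac14(\sigma_1^m-\sigma_2^m)^2$. Taking the nonnegative square root and ordering the eigenvalues so that $\sigma_1\ge\sigma_2>0$, the right-hand side equals $\tfrac12(\sigma_1^m-\sigma_2^m)$; adding and subtracting it to $\tfrac12(a_m+c_m)=\tfrac12(\sigma_1^m+\sigma_2^m)$ isolates $\sigma_1^m$ and $\sigma_2^m$, and the $m$-th root then gives \eqref{lambda1}--\eqref{lambda2}. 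Dividing $2b_m$ by $a_m-c_m$ cancels the common factor $\sigma_1^m-\sigma_2^m$ and leaves $\tan 2\theta_d$, so $\theta_d=\tfrac12\tan^{-1}\!\big(2b_m/(a_m-c_m)\big)$, which is \eqref{thetad}.

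The only delicate points, which I would flag explicitly, concern degeneracy and branch selection rather than any substantive difficulty. If $\sigma_1(t)=\sigma_2(t)$, then $\mathbf{U}_{xy}(t)$ is a scalar multiple of the identity, $b_m=0$ and $a_m=c_m$, so \eqref{thetad} becomes a $0/0$ expression and $\theta_d$ is genuinely arbitrary; I would note that in this case every choice of $\theta_d$ reproduces the same $\mathbf{U}_{xy}^m$, so the formula is to be read under the standing assumption $\sigma_1\ne\sigma_2$ (i.e.\ nontrivial shear). Similarly, $\tan^{-1}$ recovers $2\theta_d$ only modulo $\pi$, so the branch of $\theta_d$ must be fixed consistently with the sign of $a_m-c_m$ and with continuity in $t$, which the decomposition \eqref{Qxy}, \eqref{RDDDDDDDDD} implicitly presumes. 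Everything else is the routine $2\times2$ algebra sketched above, and no individual step is expected to be an obstacle beyond this bookkeeping.
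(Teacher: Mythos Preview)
Your proposal is correct and follows essentially the same route as the paper: both exploit the orthogonality of $\mathbf{R}_D(t)$ to obtain $\mathbf{U}_{xy}^m(t)=\mathbf{R}_D(t)\mathbf{\Lambda}^m(t)\mathbf{R}_D^T(t)$ (the paper phrases this as a short induction, you as a telescoping product) and then read off $a_m,b_m,c_m$ by direct multiplication. Your write-up is in fact more complete than the paper's own proof, which stops after establishing \eqref{amm}--\eqref{cmm} and never explicitly derives the inverse relations \eqref{lambda1}--\eqref{thetad}; your trace/difference/double-angle manipulation fills that gap, and your remarks on the $\sigma_1=\sigma_2$ degeneracy and the $\tan^{-1}$ branch are apt caveats the paper omits.
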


\begin{proof}
Because $\mathbf{R}_{D}(t)$ is orthogonal at time $t$, $\mathbf{R}_{D}^T(t)\mathbf{R}_{D}(t)=\mathbf{I}_2$. If matrix $\mathbf{U}_{xy}^m$ {\color{black}is} expressed as 
\begin{equation}\label{UDANYM}
    \mathbf{U}_{xy}^m(t)=\mathbf{R}_{D}(t)\mathbf{\Lambda}^m\mathbf{R}_{D}(t),
\end{equation}
for $m=1,2,\cdots$, then,
\begin{equation}\label{UDM}
    \begin{split}
    \mathbf{U}_{xy}^{m+1}(t)=&\mathbf{R}_{D}(t)\mathbf{\Lambda}\mathbf{R}_{D}^T(t)\mathbf{R}_{D}(t)\mathbf{\Lambda}^m\mathbf{R}_{D}(t)\\
    =&\mathbf{R}_{D}(t)\mathbf{\Lambda}^{m+1}\mathbf{R}_{D}(t).
\end{split}
\end{equation}
{\color{black}Since} Eq. \eqref{UDANYM} is valid for $m=0${\color{black}, Eq.} \eqref{UDM} ensures that Eq. \eqref{UDANYM} is valid for any $m>0$. By replacing \eqref{LAMBDAAAAAAA} and \eqref{RDDDDDDDDD} into \eqref{UDANYM}, elements of matrix $\mathbf{U}_{xy}^m$ ($a_m$, $b_m$, $c_m$) are obtained by Eqs. \eqref{amm}, \eqref{bmm}, and \eqref{cmm}.
\end{proof}

By provoking Proposition \ref{prop1}, matrix $\mathbf{U}_{xy}^2=\mathbf{Q}_{xy}^T\mathbf{Q}_{xy}$ \cite{rastgoftar2020scalable} can be expressed in the form of Eq. \eqref{Uxym} where $m=2$ and 
\begin{subequations}
\begin{equation}
    a_2(t)=\mathbf{y}_{L,HT}^T(t)\mathbf{O}^T\mathbf{\Gamma}^T\left(\mathbf{E}_1^T\mathbf{E}_1+\mathbf{E}_3^T\mathbf{E}_3\right)\mathbf{\Gamma}\mathbf{O}\mathbf{y}_{L,HT}(t),
\end{equation}
\begin{equation}
    b_2(t)=\mathbf{y}_{L,HT}^T(t)\mathbf{O}^T\mathbf{\Gamma}^T\left(\mathbf{E}_1^T\mathbf{E}_2+\mathbf{E}_3^T\mathbf{E}_4\right)\mathbf{\Gamma}\mathbf{O}\mathbf{y}_{L,HT}(t),
\end{equation}
\begin{equation}
    c_2(t)=\mathbf{y}_{L,HT}^T(t)\mathbf{O}^T\mathbf{\Gamma}^T\left(\mathbf{E}_2^T\mathbf{E}_2+\mathbf{E}_4^T\mathbf{E}_4\right)\mathbf{\Gamma}\mathbf{O}\mathbf{y}_{L,HT}(t).
\end{equation}
\end{subequations}
Therefore, we can determine $\sigma_1(t)$, $\sigma_2(t)$, and {\color{black}$\theta_d(t)$} by replacing $m=2$, $a_m(t)=a_2(t)$, $b_m(t)=b_2(t)$, and $c_m(t)=c_2(t)$ into Eqs. \eqref{lambda1}, \eqref{lambda2}, and \eqref{thetad} at time $t\in \left[t_s,t_u\right]$. Furthermore, matrix $\mathbf{R}_{xy}(t)=\mathbf{Q}\mathbf{U}_{xy}^{-1}$ is related to $\mathbf{y}_{L,HT}(t)$ by
\begin{equation}\label{RXYPOS}
    \mathbf{R}_{xy}(t)=\begin{bmatrix}
    \mathbf{E}_1\mathbf{\Gamma}\mathbf{O}\mathbf{y}_{L,HT}(t)&\mathbf{E}_2\mathbf{\Gamma}\mathbf{O}\mathbf{y}_{L,HT}(t)\\
    \mathbf{E}_3\mathbf{\Gamma}\mathbf{O}\mathbf{y}_{L,HT}(t)&\mathbf{E}_4\mathbf{\Gamma}\mathbf{O}\mathbf{y}_{L,HT}(t)\\
    \end{bmatrix}
    \begin{bmatrix}
    a_2(t)&b_2(t)\\
    b_2(t)&c_2(t)
    \end{bmatrix}
    ^{-1\over 2}.
\end{equation}
{\color{black}Therefore,} rotation angle $\theta_r(t)$ is obtained at any time $t\in \left[t_s,t_u\right]$ {\color{black}by knowing rotation matrix $\mathbf{R}_{xy}(t)$ over time interval $\left[t_s,t_u\right]$}.
\begin{proposition}\label{prop2}
If the area of the leading triangle remains constant at any time $t\in [t_s,t_u]$, then the following conditions hold:
\begin{subequations}
\begin{equation}\label{ac1}
    \sigma_2(t)={1\over \sigma_1(t)},
\end{equation}
\begin{equation}\label{ac2}
    a_2(t)c_2(t)-b_2{\color{black}^2}=1.
\end{equation}
\end{subequations}
\end{proposition}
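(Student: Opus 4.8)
The plan is to reduce both identities to the multiplicativity of the determinant under the decomposition $\mathbf{Q}_{xy}(t)=\mathbf{R}_{xy}(t)\mathbf{U}_{xy}(t)$ of Eq. \eqref{Qxy}. First I would observe that, by Eq. \eqref{globaldesiredcoordination}, the leading triangle at time $t$ is the image of the reference triangle with vertices $\mathbf{p}_{1,0},\mathbf{p}_{2,0},\mathbf{p}_{3,0}$ under an affine map of the $x$--$y$ plane whose linear part is $\mathbf{Q}_{xy}(t)$; since affine translations preserve area, the area of the leading triangle equals $\lvert\det\mathbf{Q}_{xy}(t)\rvert$ times the (fixed) area of the reference triangle. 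By Assumption \ref{assmq1}, $\mathbf{Q}(t_s)=\mathbf{I}_3$, so $\det\mathbf{Q}_{xy}(t_s)=1$; and because $\mathbf{Q}_{xy}(t)$ is non-singular for every $t\in[t_s,t_u]$ (the leaders stay non-aligned), the continuous map $t\mapsto\det\mathbf{Q}_{xy}(t)$ never vanishes and hence remains positive. Therefore the hypothesis that the area is constant (equal to its initial value) is equivalent to $\det\mathbf{Q}_{xy}(t)=1$ on $[t_s,t_u]$. Alternatively, this is exactly what constraint \eqref{c1} encodes, since $\mathbf{\Psi}=\mathbf{O}^T\mathbf{P}^T\mathbf{P}\mathbf{O}$ represents a constant multiple of the squared area of the leading triangle.

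Next I would evaluate $\det\mathbf{Q}_{xy}$ via the decomposition. The matrix $\mathbf{R}_{xy}(t)$ of Eq. \eqref{Rxy} is a planar rotation, so $\det\mathbf{R}_{xy}(t)=1$; and from $\mathbf{U}_{xy}(t)=\mathbf{R}_D(t)\mathbf{\Lambda}(t)\mathbf{R}_D^T(t)$ with $\mathbf{R}_D(t)$ orthogonal and $\mathbf{\Lambda}(t)=\mathrm{diag}\left(\sigma_1(t),\sigma_2(t)\right)$, we get $\det\mathbf{U}_{xy}(t)=\sigma_1(t)\sigma_2(t)$. Hence $\det\mathbf{Q}_{xy}(t)=\sigma_1(t)\sigma_2(t)$, and the constant-area condition forces $\sigma_1(t)\sigma_2(t)=1$; since $\sigma_1(t)>0$ this is Eq. \eqref{ac1}.

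Finally, for Eq. \eqref{ac2} I would use $\mathbf{U}_{xy}^2=\mathbf{Q}_{xy}^T\mathbf{Q}_{xy}$ together with Proposition \ref{prop1} at $m=2$, which identifies the entries of $\mathbf{U}_{xy}^2$ as $a_2,b_2,c_2$ in the form \eqref{Uxym}; thus $\det\mathbf{U}_{xy}^2=a_2(t)c_2(t)-b_2^2(t)$. On the other hand $\det\mathbf{U}_{xy}^2=\left(\det\mathbf{U}_{xy}\right)^2=\left(\sigma_1(t)\sigma_2(t)\right)^2=1$ by the first part, giving Eq. \eqref{ac2}. Equivalently, substituting \eqref{amm}--\eqref{cmm} with $m=2$ and expanding, all cross terms cancel and one is left with $a_2c_2-b_2^2=\sigma_1^2\sigma_2^2$. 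The only step that needs any care is the sign discussion in the first paragraph, i.e.\ fixing $\det\mathbf{Q}_{xy}$ to $+1$ rather than $\pm1$; once that is in place, both claims are immediate determinant computations.
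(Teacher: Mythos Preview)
Your proposal is correct and follows essentially the same route as the paper's proof: both reduce the constant-area hypothesis, via Assumption~\ref{assmq1}, to $\sigma_1(t)\sigma_2(t)=1$ and then read off $a_2c_2-b_2^2$ as the determinant of $\mathbf{U}_{xy}^2$. Your version is slightly more careful---you make explicit the continuity argument that pins $\det\mathbf{Q}_{xy}$ to $+1$ rather than $\pm 1$, and you distinguish $\det\mathbf{U}_{xy}^2$ from $\det\mathbf{U}_{xy}$---but the underlying idea is identical.
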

\begin{proof}
Per Assumption \ref{assmq1}, $\mathbf{U}_{xy}(t_s)=\mathbf{I}_2$. If the area of the leading triangle remains constant, then $\sigma_{1}(t)\sigma_2(t)=\sigma_{1}(t_s)\sigma_2(t_s)=1$ and $\left|\mathbf{U}_{xy}(t)\right|{\color{black}= a_2(t)c_2(t)-b_2^2}=1$ at any time $t$. Therefore, conditions \eqref{ac1} and \eqref{ac2} hold at any time $t\in \left[t_s,t_u\right]$.
\end{proof}

\begin{theorem}
Assume every quadcopter $i\in \mathcal{V}$ can be enclosed by a ball of radius $\epsilon$, and it can execute a proper control input $\mathbf{u}_i$ such that
\begin{equation}\label{BoundedDeviation}
    \bigwedge_{i\in \mathcal{V}}\|\mathbf{r}_{i}(t)-\mathbf{p}_i(t)\|\leq \delta,\qquad \forall t\in \left[t_s,t_u\right].
\end{equation}
Let
\begin{equation}
    d_{\mathrm{min}}=\min\limits_{i,j\in \mathcal{V},~j\neq i}\left\|\mathbf{p}_{i,0}-\mathbf{p}_{j,o}\right\|,
\end{equation}
be the minimum separation distance between two quadcopters.
Then, collision between every two quadcopers and collision of the MQS with obstacles are both avoided, if
the largest eigenvalue of matrix $\mathbf{U}_{xy}$ satisfies inequality constraint
\begin{equation}\label{firstsigma1}
\sigma_1(t)\leq
\dfrac{d_\mathrm{min}}{2\left(\delta+\epsilon\right)},
\end{equation}
and every quadcopter remains inside the containment ball {\color{black}$\mathcal{S}\left(\mathbf{d}\left(t\right),r_{\mathrm{max}}\right)$} at any time $t\in \left[t_s,t_u\right]$.
\end{theorem}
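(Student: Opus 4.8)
The plan is to split the claim into its two independent assertions — pairwise (inter-agent) collision avoidance, and MQS--obstacle collision avoidance — and dispatch them separately. The first is the substantive part; the second follows almost immediately from the containment hypothesis together with the A*-planning guarantees of Section~\ref{Planning1}.

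For the inter-agent part, fix $t\in[t_s,t_u]$ and two distinct $i,j\in\mathcal{V}$. First I would subtract two instances of the affine law \eqref{globaldesiredcoordination} to obtain $\mathbf{p}_i(t)-\mathbf{p}_j(t)=\mathbf{Q}(t)\left(\mathbf{p}_{i,0}-\mathbf{p}_{j,0}\right)$, then use the block structure of $\mathbf{Q}$ and the fact that every reference position has zero $z$-component to reduce the right-hand side to $\mathbf{Q}_{xy}(t)\,\mathbf{w}_{ij}$ stacked on a zero entry, where $\mathbf{w}_{ij}$ is the $x$--$y$ part of $\mathbf{p}_{i,0}-\mathbf{p}_{j,0}$ and $\|\mathbf{w}_{ij}\|=\|\mathbf{p}_{i,0}-\mathbf{p}_{j,0}\|\ge d_{\min}$. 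Using the polar decomposition \eqref{Qxy}, orthogonality of $\mathbf{R}_{xy}(t)$ gives $\|\mathbf{p}_i(t)-\mathbf{p}_j(t)\|=\|\mathbf{U}_{xy}(t)\,\mathbf{w}_{ij}\|$, and since $\mathbf{U}_{xy}(t)$ is symmetric positive definite with eigenvalues $\sigma_1(t)\ge\sigma_2(t)>0$, the Rayleigh bound yields $\|\mathbf{U}_{xy}(t)\,\mathbf{w}_{ij}\|\ge\sigma_2(t)\,\|\mathbf{w}_{ij}\|$. Because the area of the leading triangle is held constant, Proposition~\ref{prop2} gives $\sigma_2(t)=1/\sigma_1(t)$; combining this with $\|\mathbf{w}_{ij}\|\ge d_{\min}$ and the hypothesis \eqref{firstsigma1} produces $\|\mathbf{p}_i(t)-\mathbf{p}_j(t)\|\ge d_{\min}/\sigma_1(t)\ge 2(\delta+\epsilon)$. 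Finally I would invoke the tracking bound \eqref{BoundedDeviation} twice inside the triangle inequality $\|\mathbf{r}_i(t)-\mathbf{r}_j(t)\|\ge\|\mathbf{p}_i(t)-\mathbf{p}_j(t)\|-\|\mathbf{r}_i(t)-\mathbf{p}_i(t)\|-\|\mathbf{r}_j(t)-\mathbf{p}_j(t)\|\ge 2(\delta+\epsilon)-2\delta=2\epsilon$, so the radius-$\epsilon$ safety balls of agents $i$ and $j$ have disjoint interiors; this is exactly \eqref{c2}, and it holds uniformly in $t$.

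For the obstacle part, the hypothesis is that every quadcopter — and hence, using $\|\mathbf{r}_i(t)-\mathbf{p}_i(t)\|\le\delta$ together with the radius-$\epsilon$ body, every point of its physical footprint — lies in $\mathcal{S}(\mathbf{d}(t),r_{\max})$ for all $t$. By construction the center $\mathbf{d}(t)$ traverses the A*-planned waypoints $\bar{\mathbf{d}}_0,\dots,\bar{\mathbf{d}}_{n_\tau}\in\mathcal{F}$, and membership in $\mathcal{F}$ is precisely conditions \eqref{firstcontainmentcond}--\eqref{secondcontainmentcond}, which certify that $\mathcal{S}(\mathbf{d}(t),r_{\max})$ shares no point with any obstacle polytope $\mathcal{P}_j$. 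Hence no quadcopter ever intersects an obstacle, establishing the second assertion.

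The step I expect to be the crux is the uniform-in-$t$ lower bound $\|\mathbf{p}_i(t)-\mathbf{p}_j(t)\|\ge d_{\min}/\sigma_1(t)$: it requires correctly peeling off the rotation via the polar decomposition, reading $\sigma_2$ as the \emph{smallest} eigenvalue of $\mathbf{U}_{xy}$, and — crucially — using the area-preservation identity $\sigma_1(t)\sigma_2(t)=1$ so that a single one-sided bound on $\sigma_1(t)$ simultaneously prevents excessive contraction of inter-agent distances. A secondary point is to interpret ``every quadcopter remains inside the containment ball'' as a statement about the full $\epsilon$-inflated footprint rather than merely the center, and to note that validity of $\mathcal{S}(\mathbf{d}(t),r_{\max})$ along each straight segment joining consecutive waypoints is inherited from the grid construction of Section~\ref{Planning1}.
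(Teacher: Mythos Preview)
Your proposal is correct and follows essentially the same route as the paper: reduce inter-agent collision avoidance to the lower bound $\sigma_2(t)\ge 2(\delta+\epsilon)/d_{\min}$, convert it to the stated upper bound on $\sigma_1(t)$ via Proposition~\ref{prop2}, and handle obstacle avoidance through the containment-ball hypothesis together with the A*-planning guarantees. The only difference is that the paper imports the $\sigma_2$-based inter-agent bound from \cite{rastgoftar2020scalable}, whereas you derive it in place from the affine law \eqref{globaldesiredcoordination}, the polar decomposition \eqref{Qxy}, and the triangle inequality, making your argument self-contained.
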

\begin{proof}
Per Eqs. \eqref{lambda1} and \eqref{lambda2}, $\sigma_2(t)\leq \sigma_1(t)$ at any time $t\in \left[t_s,t_u\right]$. Collision between every two quadcopters is avoided, if \cite{rastgoftar2020scalable}
\begin{equation}\label{interagentcollision}
    \sigma_2(t)\geq {2\left(\delta+\epsilon\right)\over d_{\mathrm{min}}},\qquad \forall t\in \left[t_s,t_u\right].
\end{equation}
 Per Proposition \ref{prop2}, $\sigma_2(t)={1\over \sigma_1(t)}$. Thus, Eq. \eqref{interagentcollision} can be rewritten as follows:
 \begin{equation}\label{interagentcollisionmain}
    \sigma_1(t)\leq  {2\left(\delta+\epsilon\right)\over d_{\mathrm{min}}},\qquad \forall t\in \left[t_s,t_u\right].
\end{equation}
By applying A* search method, we ensure that the containment ball does not hit obstacles in the motion space. 
Therefore, obstacle collision avoidance is guaranteed{\color{black},} if  quadcopters are all inside the containment ball $\mathcal{S}\left(\mathbf{d}\left(t\right),r_{\mathrm{max}}\right)$ at any time $t\in \left[t_s,t_u\right]$.
\end{proof}

\textbf{Intermediate Configurations Leaders:}  
We offer a procedure {\color{black}with the following five main steps} to determine the intermediate waypoints of the leaders:

\textit{Step 1:} Given $\bar{\mathbf{y}}_{L,HT,n_\tau}=\mathbf{y}_{L,HT}\left(t_{u}\right)$, $\sigma_{1,n_\tau}=\sigma_1(t_u)$, $\theta_{d,n_\tau}=\theta_d(t_u)$, and $\theta_{r,n_\tau}=\theta_r(t_u)$ are computed using Eqs. \eqref{lambda1}, \eqref{thetad}, and \eqref{RXYPOS}, respectively. 

\textit{Step 2:} We compute
\begin{subequations}
\begin{equation}
    \sigma_{1,k}=\beta_k\sigma_{1,0}+(1-\beta_k)\sigma_{1,n_\tau},
\end{equation}
\begin{equation}
    \theta_{d,k}=(1-\beta_k)\theta_{d,n_\tau},
\end{equation}
\begin{equation}
    \theta_{r,k}=(1-\beta_k)\theta_{r,n_\tau}
\end{equation}
\end{subequations}
for $k=1,\cdots,n_\tau-1${\color{black}, where $\beta_k$ is computed using Eq. \eqref{betak}.}

\textit{Step 3:} We compute $\sigma_{2,k}={1\over \sigma_{1,k}}$ for $k=1,\cdots,n_\tau-1$.

\textit{Step 4:} Given $\sigma_{1,k}${\color{black},}  $\sigma_{2,k}$, and $\theta_{d,k}${\color{black},} matrix $\mathbf{U}_{xy,k}=\mathbf{U}_{xy}\left(t_k\right)$ is obtained by Eq. \eqref{Uxy} for $k=1,\cdots,n_\tau-1$. Also, matrix $\mathbf{R}_{xy,k}=\mathbf{R}_{xy}\left(t_k\right)$ is obtained using Eq. \eqref{Rxy} by knowing the rotation angle  $\theta_{r,k}$ for $k=1,\cdots,n_\tau-1$.

\textit{Step {\color{black}5}:} By knowing  $\mathbf{R}_{xy,k}=\mathbf{R}_{xy}\left(t_k\right)$  and $\mathbf{U}_{xy,k}=\mathbf{U}_{xy}\left(t_k\right)${\color{black},} the Jacobian matrix  $\mathbf{Q}_{xy,k}=\mathbf{Q}_{xy}\left(t_k\right)$ is obtained using Eq. \eqref{Qxy}. Then, we can use relation \eqref{globaldesiredcoordination} to obtain $\bar{\mathbf{y}}_{L,HT,k}$ by replacing $\mathbf{Q}_{xy,k}=\mathbf{Q}_{xy}\left(t_k\right)$ and $\bar{\mathbf{d}}_k$ for $k=1,\cdots,n_\tau-1$.

\subsection{Optimal Control Planning}
\label{Planning3}
{\color{black}This section offers an optimal control solution to determine the leaders' desired trajectories  connecting every two consecutive waypoints $\bar{\mathbf{y}}_{L,HT,k}$ and $\bar{\mathbf{y}}_{L,HT,k+1}$ for $k=0,1,\cdots,n_\tau-1$, where $z$ components of the leaders is defined by Eq. \eqref{zcomponent}, and $x$ and $y$ components the leaders' desired trajectories are governed by \eqref{maindynamicsss}.}

\textbf{Coordination Constraint:} Per equality constraint \eqref{c1}, the area of the leading triangle, given by
\begin{equation}\label{aa}
    A(t)=\mathbf{y}_{L,HT}^T(t)\mathbf{\Psi}\mathbf{y}_{L,HT}(t),
\end{equation}
must be equal to constant value $A_s$ at any time $t\in \left[t_s,t_u\right]$. This equality constraint is satisfied, if $\mathbf{y}_{L,HT}(t)$ is updated by dynamics \eqref{maindynamicsss},  $c\left(\mathbf{x}_L,\mathbf{u}_L\right)=\ddot{A}\left(t\right)=0$ at any time $t\in \left[t_k,t_{k+1}\right]$ for $k=0,1,\cdots,n_\tau-1$, and the following boundary conditions are satisfied:
\begin{subequations}
\begin{equation}\label{c11}
\resizebox{0.99\hsize}{!}{%
$
k=0,1,\cdots,n_\tau,\qquad {\mathbf{y}}_{L,HT}^T\left(t_k\right)\mathbf{D}^T\mathbf{O}^T\mathbf{P}^T\mathbf{P}\mathbf{O}\mathbf{D}\mathbf{y}_{L,HT}\left(t_k\right)-A_s=0,
$
}
\end{equation}
\begin{equation}\label{c22}
\resizebox{0.99\hsize}{!}{%
$
k=0,1,\cdots,n_\tau,\qquad \dot{\mathbf{y}}_{L,HT}^T\left(t_k\right)\mathbf{D}^T\mathbf{O}^T\mathbf{P}^T\mathbf{P}\mathbf{O}\mathbf{D}\mathbf{y}_{L,HT}\left(t_k\right)=0.
$
}
\end{equation}
\end{subequations}
By taking the second time derivative of ${A}\left(t\right)$, $c\left(\mathbf{x}_L,\mathbf{u}_L\right)$ is obtained as follows:
\begin{equation}\label{mainequality}
    c\left(\mathbf{x}_L,\mathbf{u}_L,t\right)=
    \mathbf{x}_L^T\mathbf{\Gamma}_{\mathbf{xx}}\mathbf{x}_L+2\mathbf{x}_L^T\mathbf{\Gamma}_{\mathbf{xu}}\mathbf{u}_L=0,
\end{equation}
where 
\begin{subequations}
\begin{equation}\label{gxx}
    \mathbf{\Gamma}_{\mathbf{xx}}=2\begin{bmatrix}
    \mathbf{0}_{6\times 6}&\mathbf{P}\\
    \mathbf{P}&\mathbf{0}_{6\times 6}\\
    \end{bmatrix}
    ,
\end{equation}
\begin{equation}\label{gxu}
    \mathbf{\Gamma}_{\mathbf{xu}}=\begin{bmatrix}
    \mathbf{P}\\
    \mathbf{0}_{6\times 6}\\
    \end{bmatrix}
    .
\end{equation}
\end{subequations}

The objective of the optimal control planning is to determine the desired trajectories of the leaders by minimization of cost function 
\begin{equation}\label{costttttttttttttttttt}
k=0,1,\cdots,n_\tau-1,\qquad \mathrm{J}={1\over 2}\int_{t_k\left(t_u\right)}^{t_{k+1}\left(t_u\right)}\mathbf{u}_L^T(t)\mathbf{u}_L^T(t)dt
\end{equation}
subject to boundary conditions
\begin{subequations}\label{boundaryconditionsss}
\begin{equation}
    \mathbf{x}_L(t_k)=\bar{\mathbf{x}}_{L,k},
\end{equation}
\begin{equation}\label{conditiontkplus1}
    \mathbf{x}_L(t_{k+1})=\bar{\mathbf{x}}_{L,k+1},
\end{equation}
\end{subequations}
and equality constraint \eqref{mainequality} at any time $t\in \left[t_k\left(t_u\right),t_{k+1}\left(t_u\right)\right]$ for $k=0,1,\cdots,n_\tau-1$ where $t_k(t_u)$ is obtained by \eqref{tku}. 
\begin{theorem}
Suppose leaders' desired trajectories are updated by dynamics \eqref{maindynamicsss} such that equality constraint \eqref{mainequality} is satisfied at any time $t\in \left[t_k\left(t_u\right),t_{k+1}\left(t_u\right)\right]$ given the boundary conditions in Eq. \eqref{boundaryconditionsss}. Assuming the ultimate time $t_u$ is given, $t_k$ and $t_{k+1}$ obtained by Eq. \eqref{tku} are fixed, and the optimal desired trajectories of leaders minimizing the cost function \eqref{costttttttttttttttttt} are governed by dynamics
\begin{equation}\label{rawwwwwwwwwwwwwwwwa}
    \begin{bmatrix}
    \dot{\mathbf{x}}_L\\
    \dot{\lambda}
    \end{bmatrix}
    =
    \mathbf{A}_{\mathbf{x\lambda}}\left(\gamma(t)\right)
    \begin{bmatrix}
    {\mathbf{x}}_L\\
    {\lambda}
    \end{bmatrix}
   ,
\end{equation}
where 
\begin{subequations}
\begin{equation}\label{62aagamma}
    \mathbf{A}_{\mathbf{x\lambda}}\left(\gamma(t)\right)=\begin{bmatrix}
    \mathbf{A}_L-2\gamma(t)\mathbf{B}_L\mathbf{\Gamma}_{\mathbf{xu}}^T&-\mathbf{B}_L\mathbf{B}_L^T\\
   -2\gamma \mathbf{\Gamma}_{\mathbf{xx}}+4\gamma^2(t) \mathbf{\Gamma}_{\mathbf{xu}}\mathbf{\Gamma}_{\mathbf{xu}}^T&-\mathbf{A}_L^T+2\gamma(t)\mathbf{\Gamma}_{\mathbf{xu}}\mathbf{B}_L^T\\
    \end{bmatrix}
    ,
\end{equation}\label{gammmaammamamama}
\begin{equation}
       \gamma\left(t\right)=\dfrac{\mathbf{x}_L^T\mathbf{\Gamma}_{\mathbf{xx}}\mathbf{x}_L+\mathbf{\Gamma}_{\mathbf{x}}^T\mathbf{x}_L-2\mathbf{x}_L^T\mathbf{\Gamma}_{\mathbf{xu}}\mathbf{B}_L^T\mathbf{\lambda}}{4\mathbf{x}_L^T\mathbf{\Gamma}_{\mathbf{xu}}\mathbf{\Gamma}_{\mathbf{xu}}^T\mathbf{x}_L},
\end{equation}
\end{subequations}
and $\lambda \in \mathbb{R}^{18\times 1}$ is the co-state vector. In addition,
the state vector $\mathbf{x}_{L}(t)$ and co-state vector  $\lambda(t)$ are obtained by
\begin{subequations}
\begin{equation}\label{xlt}
\begin{split}
    \mathbf{x}_L(t)=&\left(\mathbf{\Phi}_{11}\left(t,t_k\right)-\mathbf{\Phi}_{12}\left(t,t_{k+1}\right)\mathbf{\Phi}_{11}\left(t_{k+1},t_k\right)\right)\bar{\mathbf{x}}_{L,k}\\
    +&\mathbf{\Phi}_{12}\left(t,t_{k+1}\right)\bar{\mathbf{x}}_{L,k+1}
\end{split}
\end{equation}
\begin{equation}\label{lambdalt}
\begin{split}
    \lambda(t)=&\left(\mathbf{\Phi}_{21}\left(t,t_k\right)-\mathbf{\Phi}_{22}\left(t,t_{k+1}\right)\mathbf{\Phi}_{11}\left(t_{k+1},t_k\right)\right)\bar{\mathbf{x}}_{L,k}\\
    +&\mathbf{\Phi}_{22}\left(t,t_{k+1}\right)\bar{\mathbf{x}}_{L,k+1}
\end{split}
\end{equation}
\end{subequations}
at time $t\in \left[t_k,t_{k+1}\right]$, where
\begin{equation}\label{SatateTransitionMatrrrixxxxxxxs}
     \mathbf{\Phi}=\begin{bmatrix}
     \mathbf{\Phi}_{11}\left(t,t_k\right)&\mathbf{\Phi}_{12}\left(t,t_k\right)\\
     \mathbf{\Phi}_{21}\left(t,t_k\right)&\mathbf{\Phi}_{22}\left(t,t_k\right)\\
     \end{bmatrix}=\mathrm{exp}\left(\int_{t_k}^t\mathbf{A}_{\mathbf{x\lambda}}\left(\gamma(s)\right)ds\right)
\end{equation}
is the state transition matrix with partitions $\mathbf{\Phi}_{11}\left(t,t_k\right)\in \mathbb{R}^{12\times12}$, $\mathbf{\Phi}_{12}\left(t,t_k\right)\in \mathbb{R}^{12\times12}$, $\mathbf{\Phi}_{21}\left(t,t_k\right)\in \mathbb{R}^{12\times12}$, and $\mathbf{\Phi}_{22}\left(t,t_k\right)\in \mathbb{R}^{12\times12}$.
\end{theorem}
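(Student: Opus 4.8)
The plan is to treat this as a fixed-horizon, fixed-endpoint optimal control problem on $[t_k(t_u),t_{k+1}(t_u)]$ and apply the first-order necessary conditions of the calculus of variations (Pontryagin's principle). First I would adjoin the linear dynamics \eqref{maindynamicsss} through a co-state $\lambda(t)$ and the pointwise equality constraint \eqref{mainequality} through a scalar multiplier function $\gamma(t)$, forming the Hamiltonian $\mathcal{H}=\tfrac12\mathbf{u}_L^T\mathbf{u}_L+\lambda^T(\mathbf{A}_L\mathbf{x}_L+\mathbf{B}_L\mathbf{u}_L)+\gamma\,c(\mathbf{x}_L,\mathbf{u}_L,t)$. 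Since the horizon and both endpoints $\bar{\mathbf{x}}_{L,k},\bar{\mathbf{x}}_{L,k+1}$ are fixed (the times $t_k,t_{k+1}$ being frozen by \eqref{tku} once $t_u$ is given), no transversality terms arise and the stationarity conditions reduce to $\dot{\mathbf{x}}_L=\partial\mathcal{H}/\partial\lambda$, $\dot{\lambda}=-\partial\mathcal{H}/\partial\mathbf{x}_L$, $\partial\mathcal{H}/\partial\mathbf{u}_L=\mathbf{0}$, together with the feasibility condition $c=0$.

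Next I would solve the control stationarity condition $\mathbf{u}_L+\mathbf{B}_L^T\lambda+2\gamma\,\mathbf{\Gamma}_{\mathbf{xu}}^T\mathbf{x}_L=\mathbf{0}$ for the optimal input $\mathbf{u}_L=-\mathbf{B}_L^T\lambda-2\gamma\,\mathbf{\Gamma}_{\mathbf{xu}}^T\mathbf{x}_L$, and substitute it into \eqref{mainequality}. The result is a single scalar equation that is affine in $\gamma$ and can be solved explicitly, provided the constraint qualification $\mathbf{\Gamma}_{\mathbf{xu}}^T\mathbf{x}_L\neq\mathbf{0}$ holds along the trajectory so that the denominator $4\mathbf{x}_L^T\mathbf{\Gamma}_{\mathbf{xu}}\mathbf{\Gamma}_{\mathbf{xu}}^T\mathbf{x}_L$ is positive; this yields precisely the expression for $\gamma(t)$ in \eqref{gammmaammamamama}. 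Substituting both $\mathbf{u}_L$ and this $\gamma$ into $\dot{\mathbf{x}}_L=\mathbf{A}_L\mathbf{x}_L+\mathbf{B}_L\mathbf{u}_L$ and into $\dot{\lambda}=-\mathbf{A}_L^T\lambda-2\gamma(\mathbf{\Gamma}_{\mathbf{xx}}\mathbf{x}_L+\mathbf{\Gamma}_{\mathbf{xu}}\mathbf{u}_L)$, then collecting the terms in $\mathbf{x}_L$ and in $\lambda$, reproduces the four coefficient blocks of $\mathbf{A}_{\mathbf{x\lambda}}(\gamma(t))$ in \eqref{62aagamma} and hence the closed-loop system \eqref{rawwwwwwwwwwwwwwwwa}; this step is a routine matrix computation that uses only $\mathbf{\Gamma}_{\mathbf{xx}}=\mathbf{\Gamma}_{\mathbf{xx}}^T$ and the definitions \eqref{gxx}--\eqref{gxu}.

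For the last part I would observe that, along an optimal pair $(\mathbf{x}_L,\lambda)$, the matrix $\mathbf{A}_{\mathbf{x\lambda}}(\gamma(t))$ depends on $t$ only through the scalar $\gamma(t)$, so \eqref{rawwwwwwwwwwwwwwwwa} is a linear time-varying system whose flow is the state-transition matrix $\mathbf{\Phi}(t,t_k)$ of \eqref{SatateTransitionMatrrrixxxxxxxs}. Propagating from $t_k$ with the still-unknown $\lambda(t_k)$ gives $\mathbf{x}_L(t)=\mathbf{\Phi}_{11}(t,t_k)\bar{\mathbf{x}}_{L,k}+\mathbf{\Phi}_{12}(t,t_k)\lambda(t_k)$; imposing the terminal condition \eqref{conditiontkplus1} then fixes $\lambda(t_k)$ uniquely in terms of $\bar{\mathbf{x}}_{L,k}$ and $\bar{\mathbf{x}}_{L,k+1}$, which requires the block $\mathbf{\Phi}_{12}(t_{k+1},t_k)$ to be invertible (a reachability-type condition on the pair governing $\mathbf{A}_{\mathbf{x\lambda}}$). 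Substituting $\lambda(t_k)$ back and re-expressing the products via the composition identity $\mathbf{\Phi}(t,t_{k+1})=\mathbf{\Phi}(t,t_k)\mathbf{\Phi}(t_k,t_{k+1})$ for transition matrices collapses the two propagated expressions into the stated closed forms \eqref{xlt} and \eqref{lambdalt}.

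The main obstacle is conceptual rather than computational: because $\gamma(t)$ in \eqref{gammmaammamamama} is itself a function of the unknown $(\mathbf{x}_L,\lambda)$, the system \eqref{rawwwwwwwwwwwwwwwwa} is genuinely nonlinear and \eqref{SatateTransitionMatrrrixxxxxxxs} should be read as an implicit (fixed-point) characterization of the optimal trajectory rather than an explicit solution; accordingly the theorem is a statement of necessary conditions. The two items that truly need an argument are (i) the constraint qualification $\mathbf{\Gamma}_{\mathbf{xu}}^T\mathbf{x}_L\neq\mathbf{0}$ ensuring $\gamma$ is well defined on $[t_k,t_{k+1}]$, and (ii) solvability of the two-point boundary value problem, i.e. nonsingularity of $\mathbf{\Phi}_{12}(t_{k+1},t_k)$, which in turn constrains how small $T_k=t_{k+1}-t_k$ (hence $t_u$) may be taken; convexity of the cost in $\mathbf{u}_L$ for frozen $\gamma$ supplies the second-order check away from the constraint nonlinearity.
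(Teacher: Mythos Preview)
Your proposal is correct and follows essentially the same route as the paper: adjoin the dynamics and the pointwise constraint via $(\lambda,\gamma)$, take first-order stationarity to get $\mathbf{u}_L=-\mathbf{B}_L^T\lambda-2\gamma\mathbf{\Gamma}_{\mathbf{xu}}^T\mathbf{x}_L$, substitute into $c=0$ to solve for $\gamma$, feed back into the state and co-state equations to obtain $\mathbf{A}_{\mathbf{x\lambda}}(\gamma)$, and then shoot the two-point boundary value problem via $\mathbf{\Phi}(t,t_k)$ and the terminal condition to recover \eqref{xlt}--\eqref{lambdalt}. Your additional remarks on the constraint qualification $\mathbf{\Gamma}_{\mathbf{xu}}^T\mathbf{x}_L\neq\mathbf{0}$, on the invertibility of $\mathbf{\Phi}_{12}(t_{k+1},t_k)$, and on reading \eqref{SatateTransitionMatrrrixxxxxxxs} as an implicit/fixed-point characterization (since $\gamma$ depends on $(\mathbf{x}_L,\lambda)$) go beyond what the paper spells out and are well taken.
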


\begin{proof}
The optimal leaders' trajectories are determined by minimization of the augmented cost function 
\begin{equation}\label{JP}
\resizebox{0.99\hsize}{!}{%
$
    \mathrm{J}_a=\int_{t_k}^{t_{k+1}}\left({1\over 2}\mathbf{u}_L^T\mathbf{u}_L^T+\mathbf{\lambda}^T\left(\mathbf{A}_L{\mathbf{x}}_L+\mathbf{B}_L{\mathbf{u}}_L-\dot{\mathbf{x}}_L\right)+\gamma c\left(\mathbf{x}_L,\mathbf{u}_L\right)\right)dt,
    $
    }
\end{equation}
where $\mathbf{\lambda}\in \mathbb{R}^{12\times 1}$ is the co-state vector and $\gamma(t)$ is the Lagrange multiplier. By taking variation from the augmented cost function \eqref{JP}, we can write
\begin{equation}\label{JPP}
\begin{split}
    \delta \mathrm{J}_a=\int_{t_k}^{t_{k+1}}&\Bigg[\delta \mathbf{u}_L^T\left(\mathbf{u}_L+\mathbf{B}_L^T\mathbf{\lambda}+\gamma {\partial c\over \partial \mathbf{u}_L}\right)+\delta \mathbf{x}_L^T\left(\dot{\mathbf{\lambda}}+\mathbf{A}_L^T\mathbf{\lambda}+\gamma {\partial c\over \partial \mathbf{x}_L}\right)\\
    +&\delta \mathbf{\lambda}^T\left(\mathbf{A}_L{\mathbf{x}}_L+\mathbf{B}_L{\mathbf{u}}_L-\dot{\mathbf{x}}_L\right)\bigg]dt=0,
\end{split}
  \end{equation}
where ${\partial c\over \partial \mathbf{x}_L}=2\mathbf{\Gamma}_{\mathbf{xx}}\mathbf{x}_L+2\mathbf{\Gamma}_{\mathbf{xu}}\mathbf{u}_L$ and ${\partial c\over \partial \mathbf{u}_L}=2\mathbf{\Gamma}_{\mathbf{xu}}^T\mathbf{x}_L$.
By imposing $\delta\mathrm{J}_a=0$, the state dynamics \eqref{maindynamicsss} is obtained, the co-state dynamics become
  \begin{equation}
      \dot{\mathbf{\lambda}}=-\mathbf{A}_L^T\mathbf{\lambda}-\gamma(t)\dfrac{\partial c}{\partial \mathbf{x}_L},
  \end{equation}
  and $\mathbf{u}_L$ is obtained as follows:
  \begin{equation}\label{ul}
\mathbf{u}_{L}=-\mathbf{B}_L^T\mathbf{\lambda}-\gamma {\partial c\over \partial \mathbf{u}_L}=-\mathbf{B}_L^T\mathbf{\lambda}-2\gamma(t) \mathbf{\Gamma}_{\mathbf{xu}}^T\mathbf{x}_L.
\end{equation}
By substituting $\mathbf{u}_L=-\mathbf{B}_L^T\mathbf{\lambda}-2\gamma(t) \mathbf{\Gamma}_{\mathbf{xu}}\mathbf{x}_L$, the equality constraint \eqref{mainequality} is converted to
\begin{equation}
\label{organizedequality}
    c\left(\mathbf{x}_L,\mathbf{u}_L\right)=\left(4\mathbf{x}_L^T\mathbf{\Gamma}_{\mathbf{xu}}\mathbf{\Gamma}_{\mathbf{xu}}^T\mathbf{x}_L\right)\gamma(t)+\mathbf{x}_L^T\mathbf{\Gamma}_{\mathbf{xx}}\mathbf{x}_L-2\mathbf{x}_L^T\mathbf{\Gamma}_{\mathbf{xu}}\mathbf{B}_L^T\mathbf{\lambda}=0.
\end{equation}
By substituting $\mathbf{u}_L=-\mathbf{B}_L^T\mathbf{\lambda}-2\gamma(t) \mathbf{\Gamma}_{\mathbf{xu}}\mathbf{x}_L-\gamma\mathbf{\mathbf{\Gamma}}_u$ into Eq. \eqref{maindynamicsss}, we also obtain the leaders' desired trajectories solving dynamics \eqref{rawwwwwwwwwwwwwwwwa}. The solution of dynamics \eqref{rawwwwwwwwwwwwwwwwa} is given by
\begin{equation}\label{optimalsolutiuo}
 \begin{bmatrix}
    {\mathbf{x}}_L(t)\\
    {\lambda}(t)
    \end{bmatrix}
    =
    \begin{bmatrix}
     \mathbf{\Phi}_{11}\left(t,t_k\right)&\mathbf{\Phi}_{12}\left(t,t_k\right)\\
     \mathbf{\Phi}_{21}\left(t,t_k\right)&\mathbf{\Phi}_{22}\left(t,t_k\right)\\
     \end{bmatrix}
    \begin{bmatrix}
    \bar{\mathbf{x}}_{L,k}\\
    {\lambda}_{k}
    \end{bmatrix}
\end{equation}
at time $t\in [t_k,t_{k+1}]$, where ${\lambda}_{k}=\lambda\left(t_k\right)$.
By imposition  boundary condition \eqref{conditiontkplus1}, 
\begin{equation}
    \lambda_k=\mathbf{\Phi}_{12}\left(t_k,t_{k+1}\right)\left(\mathbf{x}_L\left(t_{k+1}\right)-\mathbf{\Phi}_{11}\left(t_{k+1},t_k\right)\mathbf{x}_L\left(t_{k}\right)\right)
\end{equation}
is obtained from Eq. \eqref{optimalsolutiuo}. By substituting $\lambda_k$ into Eq. \eqref{optimalsolutiuo}, $\mathbf{x}_L(t)$ is obtained by Eq. \eqref{xlt} at any time $t\in \left[t_k,t_{k+1}\right]$.

\end{proof}



\begin{algorithm}
  \caption{Assignment of travel time $t_u$ and desired trajectory $\mathbf{x}_L(t)$ over $\left[t_0,t_u\right]$}\label{euclid33}
  \begin{algorithmic}[1]
      \State \textit{Get:} $\bar{\mathbf{x}}_{L,0}$, $\cdots$, $\bar{\mathbf{x}}_{L,n_\tau}$ and $\beta_0$, $\cdots$, $\beta_{n_\tau-1}$, $\epsilon_T$, $\epsilon_\gamma$, small $T_{\mathrm{min}}$ and large $T_{\mathrm{max}}$ ($t_{u,\mathrm{min}}< t_u< t_{u,\mathrm{max}}$) 
      \State  \textit{Set:} small $T_{\mathrm{min}}$, large $T_{\mathrm{max}}$, $t_0=0$, $t_1=0$, $\cdots$, $t_{n_\tau-1}=0$
     \State $t_u={T_{\mathrm{min}}+T_{\mathrm{max}}\over 2}$
      \While{$t_u-T_{\mathrm{min}}\geq \epsilon_T$}
      \For{\texttt{< $k\leftarrow 0$ to $n_\tau-1$}}
      \State $t_k\leftarrow \beta_k t_u$
      \State $t_{k+1}\leftarrow \beta_{k+1} t_u$
      \State $\gamma'(t)=0$ at every time $t\in \left[t_k,t_{k+1}\right]$
      \State $\gamma(t)=0$ at every time $t\in \left[t_k,t_{k+1}\right]$
      \State $e_\gamma\leftarrow 2\epsilon \gamma$ 
       \While{$e_\gamma\geq \epsilon_\gamma$}
       \State Compute $\mathbf{A}{\color{black}_{\mathbf{x}\lambda}}\left(\gamma(t)\right)$ using Eq. \eqref{62aagamma}
       \State Compute $\mathbf{\Phi}\left(t,t_k\right)$ $\mathbf{x}_L\left(t\right)$ using Eq. \eqref{SatateTransitionMatrrrixxxxxxxs}
       \State Obtain $\mathbf{x}_L\left(t\right)$ by Eq. \eqref{xlt} for $t\in\left[t_k,t_{k+1}\right]$
       \State Obtain $\lambda\left(t\right)$ by Eq. \eqref{lambdalt} for $t\in\left[t_k,t_{k+1}\right]$
       \State Compute $\gamma'(t)$ for $t\in\left[t_k,t_{k+1}\right]$:
       \State $\gamma'\left(t\right)=\dfrac{\mathbf{x}_L^T\mathbf{\Gamma}_{\mathbf{xx}}\mathbf{x}_L+\mathbf{\Gamma}_{\mathbf{x}}^T\mathbf{x}_L-2\mathbf{x}_L^T\mathbf{\Gamma}_{\mathbf{xu}}\mathbf{B}_L^T\mathbf{\lambda}}{4\mathbf{x}_L^T\mathbf{\Gamma}_{\mathbf{xu}}\mathbf{\Gamma}_{\mathbf{xu}}^T\mathbf{x}_L}$
       \State $e_\gamma=\max\limits_{t\in \left[t_k,t_{k+1}\right]}~\left|\gamma(t)-\gamma'(t)\right|$
       \State $\gamma(t)=\gamma'(t)$
       \EndWhile
       \EndFor
       \State $e_T=\max\limits_{t\in \left[t_0,t_u\right]}~\bigwedge_{i\in \mathcal{V}}\|\mathbf{r}_{i}(t)-\mathbf{p}_i(t)\|$
        \If{$e_T\leq \delta$}
        \State $T_{\mathrm{max}}\leftarrow t_u$
        \EndIf
        \If{$e_T> \delta$}
        \State $T_{\mathrm{min}}\leftarrow t_u$
        \EndIf
       \EndWhile
  \end{algorithmic}
\end{algorithm}

\section{Continuum Deformation Acquisition}
\label{Continuum Deformation Acquisition}
This paper considers collective motion of a quadcopter team consisting of $N$ quadcopters, where dynamics of quadcopter $i\in \mathcal{V}$ is given by
{
\begin{equation}
\label{generalnonlineardynamics}
\begin{cases}
    \dot{\mathbf{x}}_i=\mathbf{f}_i\left(\mathbf{x}_i\right)+\mathbf{g}_i\left(\mathbf{x}_i\right)\mathbf{u}_i\\
    \mathbf{r}_i=\mathbf{C}\mathbf{x}_i
    \end{cases}
    .
\end{equation}
}
In \eqref{generalnonlineardynamics},
$
    \mathbf{x}_i=
    \begin{bmatrix}
    \mathbf{r}_i^T&\dot{\mathbf{r}}_i^T&\phi_i&\theta_i&\psi_i&{\bf{\omega}}_i^T
    \end{bmatrix}
    ^T
$
is the state,  $\mathbf{u}_i=
    \begin{bmatrix}
    p_i&\tau_{\phi,i}&\tau_{\theta,i}&\tau_{\psi,i}
    \end{bmatrix}
    ^T$ is the input, {$\mathbf{C}_i=\begin{bmatrix}
    \mathbf{I}_3&\mathbf{0}_{3\times 9}
    \end{bmatrix}$,}
   \[
    {
   \resizebox{0.99\hsize}{!}{%
$
\mathbf{f}_i\left(\mathbf{x}_i\right)=
\begin{bmatrix}
\dot{\mathbf{r}}_i\\
{1\over m_i}p_i\hat{\mathbf{k}}_{b,i}-g\hat{\mathbf{e}}_3\\
\mathbf{\Gamma}_i^{-1}\left(\phi_i,\theta_i,\psi_i\right){\bf{\omega}}_i\\
\mathbf{J}_i^{-1}{\bf{\omega}}_i\times \left(\mathbf{J}_i{\bf{\omega}}_i\right)\\
\end{bmatrix},
~\mathrm{and}~\mathbf{g}_i\left(\mathbf{x}_i\right)=
\begin{bmatrix}
\mathbf{0}_{3\times 1}&\mathbf{0}_{3\times 3}\\
{1\over m_i}\hat{\mathbf{k}}_{b,i}&\mathbf{0}_{3\times 3}\\
\mathbf{0}_{3\times 1}&\mathbf{0}_{3\times 1}\\
\mathbf{0}_{3\times 1}&\mathbf{J}_i^{-1}\\
\end{bmatrix}
,
$
}}
\]
where $m_i$ and $\mathbf{J}_i$ are  the mass and mass moment of inertia of quadcopter $i\in \mathcal{V}$, respectively, $\mathbf{0}_{3\times 1}\in \mathbb{R}^{3\times 1}$, $\mathbf{0}_{3\times 3}\in \mathbb{R}^{3\times {3}}${, and $\mathbf{0}_{3\times 9}\in \mathbb{R}^{3\times {9}}$} are the zero-entry matrices, $\mathbf{I}_3\in\mathbb{R}^{3\times 3}$ is the identity matrix, $g=9.81m/s^2$ is the gravity, and 
\begin{equation}
\label{Eq55}
\mathbf{\Gamma}_i\left(\phi_i,\theta_i,\psi_i\right)=
    \begin{bmatrix}
    1&0&-\sin\theta_i\\
    0&\cos\phi_i&\cos\theta_i\sin\phi_i\\
    0&-\sin\phi_i&\cos\phi_i\cos\theta_i
    \end{bmatrix}
    .
\end{equation}
The dynamics of leader and follower quadcopter sub-teams are given by
\begin{subequations}
\begin{equation}
\label{Leaders}
    \begin{cases}
    \dot{\mathbf{x}}_L=\mathbf{F}_L\left(\mathbf{x}_L\right)+\mathbf{G}_L\left(\mathbf{x}_L\right)\mathbf{u}_L\\
    \mathbf{y}_L=\mathbf{C}_L
    \mathbf{x}_L 
    \end{cases}
    ,
\end{equation}
\begin{equation}
\label{Followers}
    \begin{cases}
    \dot{\mathbf{x}}_F=\mathbf{F}_F\left(\mathbf{x}_F\right)+\mathbf{G}_F\left(\mathbf{x}_F\right)\mathbf{u}_L\\
    \mathbf{y}_F=\mathbf{C}_F
    \mathbf{x}_F 
    \end{cases}
    ,
\end{equation}
\end{subequations}
where $\mathbf{C}_L\in \mathbb{R}^{9\times 36}$, $\mathbf{C}_F\in \mathbb{R}^{3\left(N-3\right)\times 12\left(N-3\right)}$, $\mathbf{x}_L=\begin{bmatrix}
\mathbf{x}_1^T&\cdots&\mathbf{x}_3^T
\end{bmatrix}^T$ and $\mathbf{x}_F=\begin{bmatrix}
\mathbf{x}_4^T&\cdots&\mathbf{x}_N^T
\end{bmatrix}^T$ are the state vectors of leaders and followers, $\mathbf{u}_L=\begin{bmatrix}
\mathbf{u}_1^T&\cdots&\mathbf{u}_3^T
\end{bmatrix}^T$ and $\mathbf{u}_F=\begin{bmatrix}
\mathbf{u}_4^T&\cdots&\mathbf{u}_N^T
\end{bmatrix}^T$ are the input vectors of leaders and followers, $\mathbf{y}_L=\begin{bmatrix}
\mathbf{r}_1^T&\cdots&\mathbf{r}_3^T
\end{bmatrix}^T$ and $\mathbf{y}_F=\begin{bmatrix}
\mathbf{r}_4^T&\cdots&\mathbf{r}_N^T
\end{bmatrix}^T$
 are the  output vectors of leaders and followers, and $\mathbf{F}_L\left(\mathbf{x}_L\right)=\begin{bmatrix}
\mathbf{f}_1^T\left(\mathbf{x}_1\right)&\cdots&\mathbf{f}_3^T\left(\mathbf{x}_3\right)
\end{bmatrix}^T$, $\mathbf{F}_F\left(\mathbf{x}_F\right)=\begin{bmatrix}
\mathbf{f}_4^T\left(\mathbf{x}_4\right)&\cdots&\mathbf{f}_N^T\left(\mathbf{x}_B\right)
\end{bmatrix}^T$, 
$\mathbf{G}_L\left(\mathbf{x}_L\right)=\begin{bmatrix}
\mathbf{f}_1^T\left(\mathbf{x}_1\right)&\cdots&\mathbf{f}_3^T\left(\mathbf{x}_3\right)
\end{bmatrix}^T$, $\mathbf{G}_F\left(\mathbf{x}_F\right)=\begin{bmatrix}
\mathbf{f}_4^T\left(\mathbf{x}_4\right)&\cdots&\mathbf{f}_N^T\left(\mathbf{x}_B\right)
\end{bmatrix}^T$ are smooth functions.

The continuum deformation, defined by \eqref{globaldesiredcoordination} and planned by leaders $1$, $2$, and $3$, are acquired by followers in a decentralized fashion through local communication \cite{rastgoftar2020scalable}. Communication among the quadcopters are defined by graph $\mathcal{G}\left(\mathcal{V},\mathcal{E}\right)$ with the properties presented in Section \ref{Graph Theory Notions}. Here, we review the existing communication-based guidance protocol and the trajectory control design \cite{rastgoftar2020scalable} in Sections \ref{Communication-Based Guidance Protocol} and \ref{Trajectory Control Design} below.

\subsection{Communication-Based Guidance Protocol}\label{Communication-Based Guidance Protocol}
Given followers' communication weights, we define matrix
\[
    \mathbf{W}=\begin{bmatrix}
    \mathbf{0}_{3\times 3}&\mathbf{0}_{3\times \left(N-3\right)}\\
    \mathbf{B}_{\mathrm{MQS}}&\mathbf{A}_{\mathrm{MQS}}
    \end{bmatrix}\in \mathbb{R}^{\left(N-3\right)\times N}
\]
 with partitions $\mathbf{B}_{\mathrm{MQS}}\in \mathbb{R}^{\left(N-3\right)\times 3}$ and $\mathbf{A}_{\mathrm{MQS}}\in \mathbb{R}^{\left(N-3\right)\times \left(N-3\right)}$, and $(i,j)$ entry \cite{rastgoftar2020scalable}
 \begin{equation}
     W_{ij}=\begin{cases}
     w_{i,j}&i\in \mathcal{V}_F,~j\in \mathcal{N}_{i}\\
     -1&j=i\\
     0&\mathrm{otherwise}
     \end{cases}
     .
 \end{equation}
 In Ref. \cite{rastgoftar2020scalable}, we show that 
 \[
 \mathbf{y}_{HT}=\mathrm{vec}\left(\begin{bmatrix}\mathbf{p}_1(t)&\cdots&\mathbf{p}_N(t)\end{bmatrix}^T\right)\in \mathbb{R}^{3N\times 1},
 \]
 aggregating $x$, $y$, and $z$ components of global desired positions of all quadcopters, can be defined based on $\mathbf{y}_{L,HT}(t)$ by
 \begin{equation}
     \mathbf{y}_{HT}(t)=\left(\mathbf{I}_3\otimes\mathbf{W}_L\right)\mathbf{y}_{L,HT}(t),
 \end{equation}
 where 
 \begin{equation}
     \mathbf{W}_L=\begin{bmatrix}
     \mathbf{\Omega}_2^T\left(\mathbf{p}_{1,0},\mathbf{p}_{2,0},\mathbf{p}_{3,0},\mathbf{p}_{1,0}\right)\\
     \vdots\\
     \mathbf{\Omega}_2^T\left(\mathbf{p}_{1,0},\mathbf{p}_{2,0},\mathbf{p}_{3,0},\mathbf{p}_{N,0}\right)\\
     \end{bmatrix}
     \in \mathbb{R}^{N\times 3}
 \end{equation}
is defined based on $\mathbf{W}$ by
\begin{equation}
    \mathbf{W}_L=\left(-\mathbf{I}_N+\mathbf{W}\right)^{-1}\begin{bmatrix}
    \mathbf{I}_{3}&\mathbf{0}_{3\times \left(N-3\right)}
    \end{bmatrix}
    ^T.
\end{equation}
Given the output vectors of the leaders' dynamics \eqref{Leaders}, denoted by $\mathbf{y}_L$, and followers' dynamics \eqref{Followers}, denoted by $\mathbf{y}_F$, we define the MQS output vector 
  \[
 \mathbf{y}(t)=\mathbf{R}_L\mathbf{y}_L(t)+\mathbf{R}_F\mathbf{y}_F(t)
 \]
 to measure deviation of the MQS from the desired continuum deformation coordination by checking constraint \eqref{BoundedDeviation}, where $\mathbf{R}_L=\left[R_{L_{ij}}\right]\in \mathbb{R}^{3N\times 9}$ and $\mathbf{R}_F=\left[R_{F_{ij}}\right]\in \mathbb{R}^{3N\times 3(N-3)}$ are defined as follows:
 \begin{subequations}
 \begin{equation}
      R_{L_{ij}}=\begin{cases}
        1&i=j,~ j\leq3\\
        1&i=j+N,~ 4\leq j\leq6\\
        1&i=j+N,~ 7\leq j\leq9\\
        0&\mathrm{otherwise}
     \end{cases}
     ,
 \end{equation}
  \begin{equation}
      R_{F_{ij}}=\begin{cases}
        1&4\leq i\leq N,~ j\leq3\\
        1&N+4\leq i\leq 2N,~ 4<j\leq6\\
        1&2N+4\leq i\leq 3N,~ 4<j\leq6\\
        0&\mathrm{otherwise}
     \end{cases}
     .
 \end{equation}
 \end{subequations}
As shown in Fig. \ref{blockdiagram}, $\mathbf{y}_{L,HT}(t)$ is the reference input of the control system of leader coordination, and 
\begin{equation}
    \mathbf{y}_{F,d}(t)=\left(\mathbf{I}_3\otimes \mathbf{A}_{\mathrm{MQS}}\right)\mathbf{y}_F(t)+\left(\mathbf{I}_3\otimes \mathbf{B}_{\mathrm{MQS}}\right)\mathbf{y}_L(t)
\end{equation}
is the reference input of the control system of the follower quadcopter team.

\subsection{Trajectory Control Design}\label{Trajectory Control Design}
The objective of control design is to determine $\mathbf{u}_L\in \mathbb{R}^{12\times 1}$ and $\mathbf{u}_F$ such that \eqref{BoundedDeviation} is satisfied at any time $t\in \left[t_0,t_u\right]$.  We can rewrite the safety condition \eqref{BoundedDeviation} as
 \begin{equation}\label{convertedbounded}
     \bigwedge_{i\in \mathcal{V}}\left(\left(\mathbf{y}(t)-\mathbf{y}_{HT}(t)\right)^T\mathbf{S}_i^T\mathbf{S}_i\left(\mathbf{y}(t)-\mathbf{y}_{HT}(t)\right)\leq\delta^2\right),\qquad \forall t,
 \end{equation}
where $\mathbf{S}_i=\left[\mathbf{S}_{i_{pq}}\right]\in \mathbb{R}^{3\times 3N}$ is defined as follows:
\begin{equation}
    \mathbf{S}_{i_{pq}}=\begin{cases}
    1&\bigwedge_{i=1}^3\left(\left(p=i\right)\wedge \left(q=N(i-1)+i\right)\right)\\
    0&\mathrm{otherwise}
    \end{cases}
    .
\end{equation}
We use the feedback linearization approach presented in Ref. \cite{rastgoftar2020scalable}  to obtain  the control input vector $\mathbf{u}_i(t)$ for every quadcopter $i\in \mathcal{V}$ such that inequality constraint \eqref{convertedbounded} is satisfied. 
\begin{figure*}[htb]
\centering
\includegraphics[width=6 in]{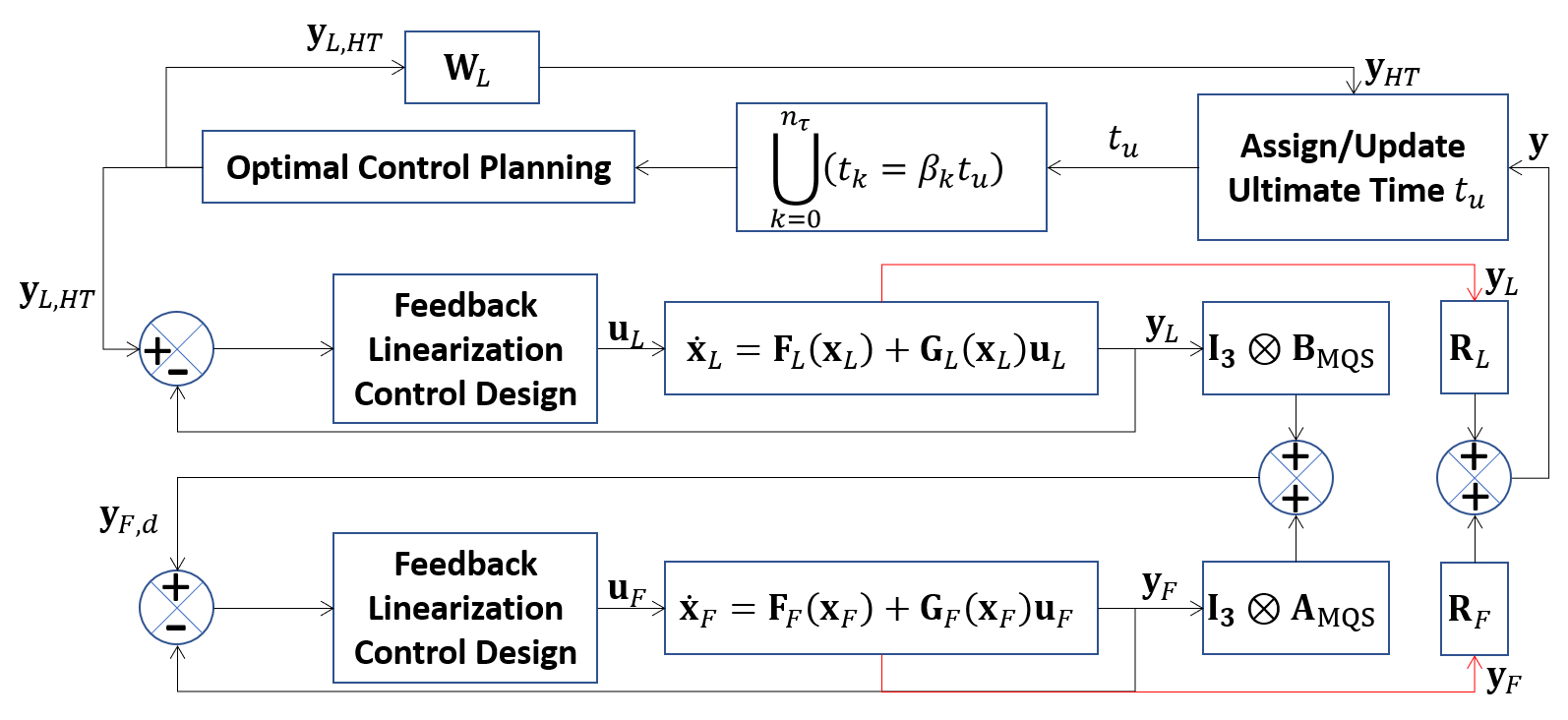}
\caption{The block diagram of the MQS continuum deformation acquisition. }
\label{blockdiagram}
\end{figure*}

\begin{figure}
 \centering
 \subfigure[]{\includegraphics[width=0.49\linewidth]{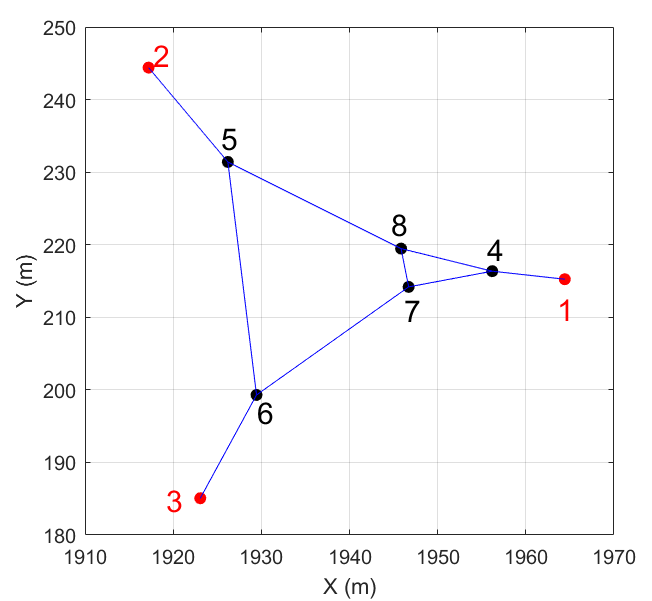}}
  \subfigure[]{\includegraphics[width=0.48\linewidth]{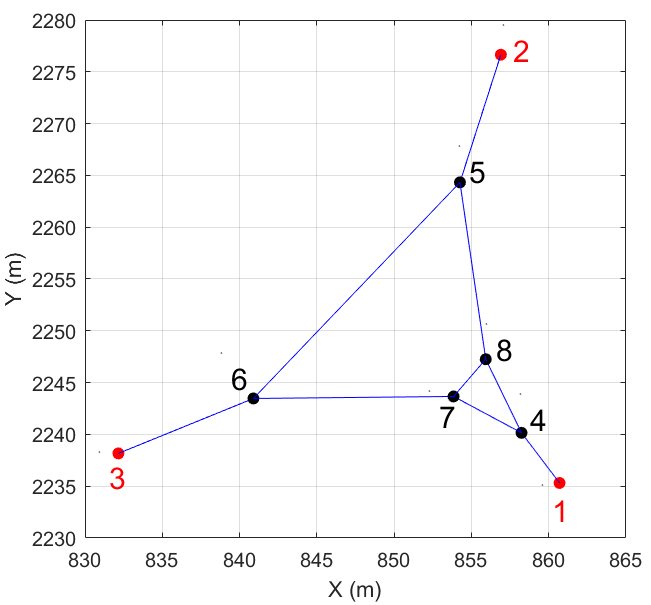}}
     \caption{(a,b) MQS initial and final formations.}
\label{Formation}
\end{figure}
\section{Simulation Results}
\label{Simulation Results}
We consider an MQS consisting of $N=8$ quadcopters with the initial formation shown in Fig. \ref{Formation} (a). The MQS is initially distributed over horizontal plane $z=43m$ where $\bar{\mathbf{d}}_s=\begin{bmatrix}
1935&215&43\end{bmatrix}^T$ is the position of the center of the containment ball $\mathcal{S}$ at the initial time $t_s=0s$. It is desired that the MQS finally reaches the final formation shown in Fig. \ref{Formation} (b) in an obstacle laden environment shown in Fig. \ref{MQS}. The final formation of the MQS is obtained by homogeneous transformation of the MQS initial formation and specified by choosing $\sigma_{1,n_\tau}=1.2$, $\sigma_{2,n_\tau}={1\over \sigma_{1,n_\tau}}=0.83$, $\theta_{d,n_\tau}=-{\pi\over 4}$, and $\bar{\mathbf{d}}_u=\begin{bmatrix}850&2250&50\end{bmatrix}^T$. 
\begin{figure}[ht]
\centering
\includegraphics[width=3.3 in]{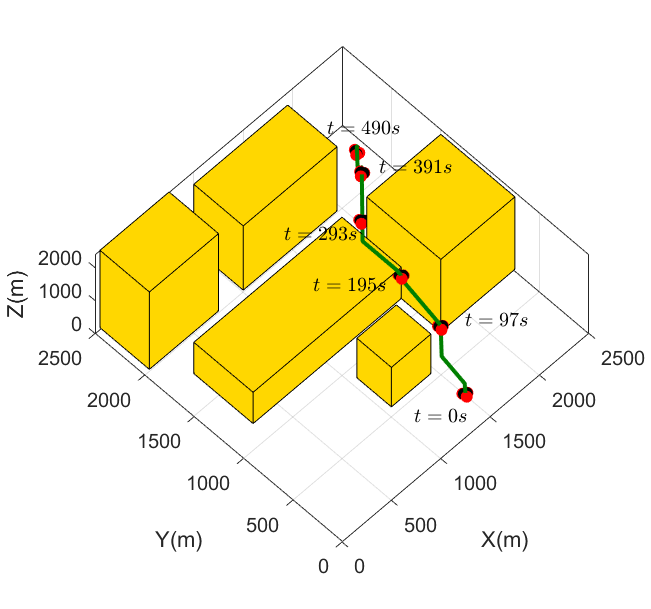}
\caption{Collective of the MQS in an obstacle-laden environment.}
\label{MQS}
\end{figure}

\textbf{Inter-agent Communication:} Given quadcopters' initial positions, followers' in-neighbors and communication weights are computed using the approach presented in Section \ref{Communication-Based Guidance Protocol} and listed in Table \ref{Table1}. Note that quadcopters' identification numbers are defined by set $\mathcal{V}=\{1,\cdots,10\}$, where $\mathcal{V}_L=\{1,2,3\}$ and $\mathcal{V}_F=\{4,\cdots,10\}$ define the identification numbers of the leader and follower quadcopters, respectively. 
\begin{table}[]
\caption{In-neighbor agents of followers $4$ through $33$ and followers' communication weights}
    \centering
    \begin{tabular}{|c|ccc|ccc|}
    \hline
    &\multicolumn{3}{c}{In-neighbors}&\multicolumn{2}{c}{Communication weights}\\
    $i\in \mathcal{V}_F$&$i_1$&$i_2$&$i_3$&$w_{i,i_1}$&$w_{i,i_2}$&$w_{i,i_3}$\\
    \hline
    4&   1&   7&   8&    0.55&    0.15    &0.30\\
    5 & 2  & 6 & 8    &0.60 &   0.15   & 0.25\\
    6  &  3  & 5  &  7   &0.60  &  0.15  &  0.25\\
    7   & 4 &  6   & 8  & 0.40   & 0.20 &   0.40\\
    8&   4&    5&   7&   0.45    &0.25&    0.30\\
   \hline
    \end{tabular}
    \label{Table1}
\end{table}

\begin{figure}
 \centering
 \subfigure[$\ddot{x}_{1,HT}^*(t)$]{\includegraphics[width=0.32\linewidth]{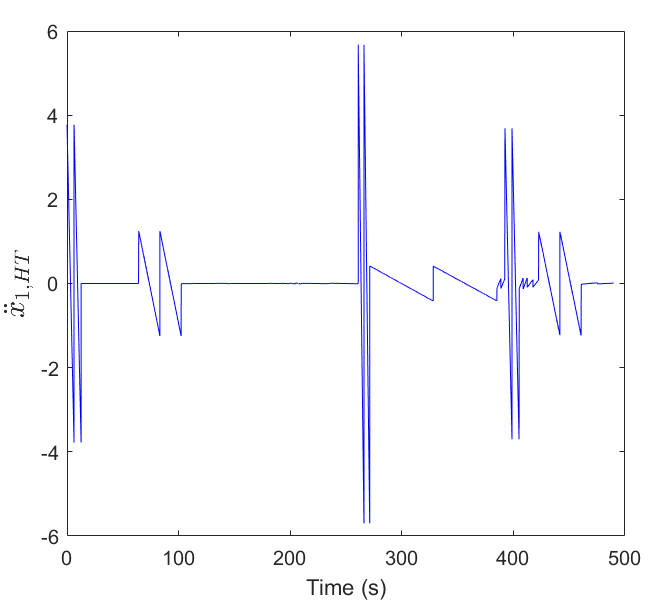}}
  \subfigure[$\ddot{x}_{2,HT}^*(t)$]{\includegraphics[width=0.32\linewidth]{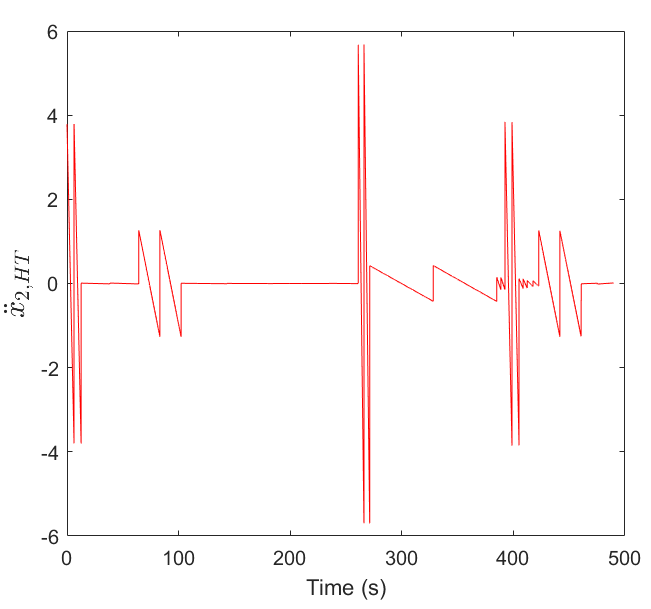}}
  \subfigure[$\ddot{x}_{3,HT}^*(t)$]{\includegraphics[width=0.32\linewidth]{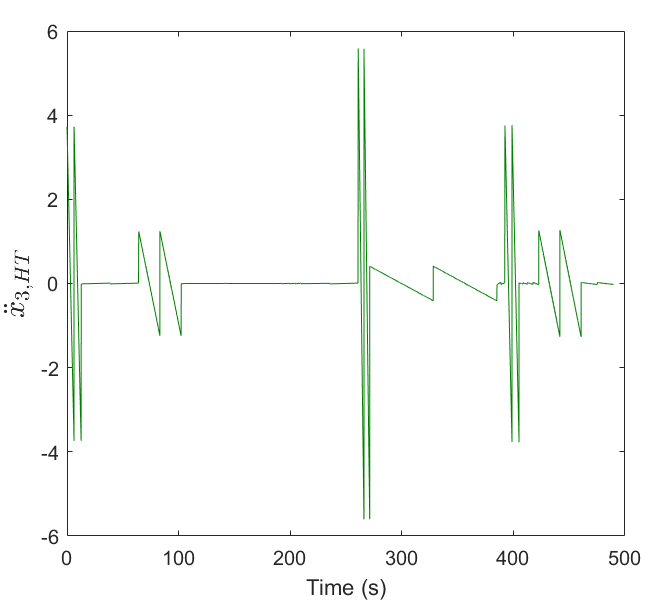}}
  \subfigure[$\ddot{y}_{1,HT}^*(t)$]{\includegraphics[width=0.32\linewidth]{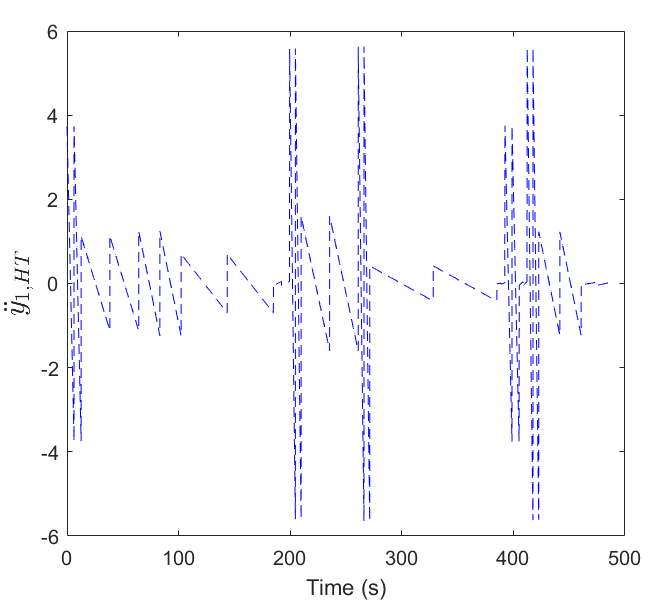}}
  \subfigure[$\ddot{y}_{2,HT}^*(t)$]{\includegraphics[width=0.32\linewidth]{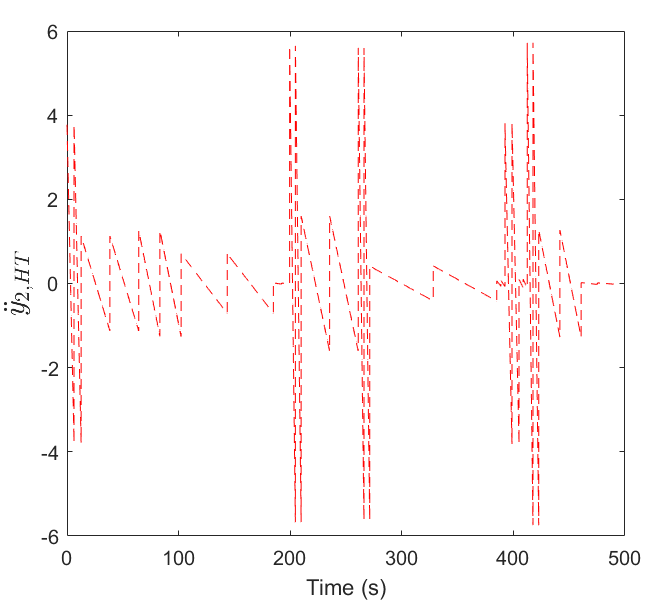}}
  \subfigure[$\ddot{y}_{3,HT}^*(t)$]{\includegraphics[width=0.32\linewidth]{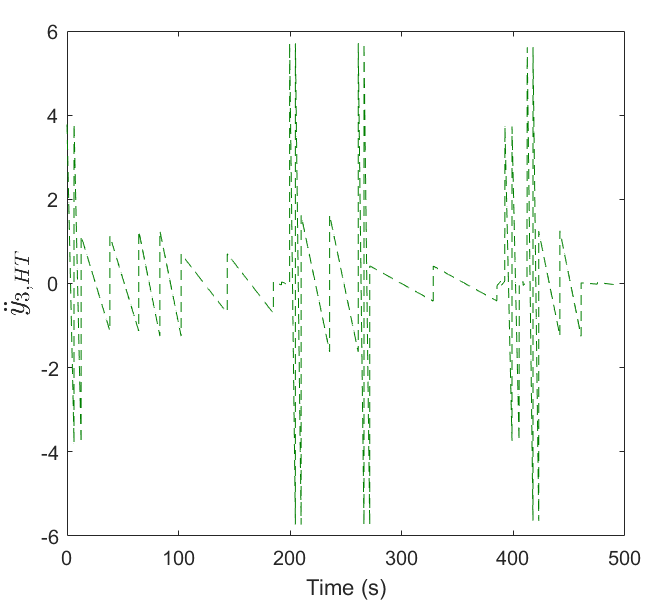}}
     \caption{Components of optimal control input $\mathbf{u}_L^*$ versus time for $t\in [0,490]s$.}
\label{accelerationha}
\end{figure}

\textbf{Safety Specification:} We assume that every quacopter can be enclosed by a ball of radius $\epsilon=0.45m$. 
For the initial formation shown in Fig. \ref{Formation} (a), $d_{\mathrm{min}}=3.5652m$ is the minimum separation distance between every two quadcopters. Furthermore, $\sigma_{\mathrm{min}}={1\over \sigma_{1,n_\tau}}=0.83$ is the lower bound for the eigenvalues of matrix $\mathbf{U}_{xy}$. Per Eq. \eqref{interagentcollision},
\[
\delta={1\over2}\left(d_{\mathrm{min}}\sigma_{\mathrm{min}}-2\epsilon\right)=1.04m
\]
is the  upper-bound for deviation of every quadcopter from its global desired position at any time $t\in \left[t_0,t_u\right]$.
\begin{figure}[htb]
\centering
\includegraphics[width=3.3  in]{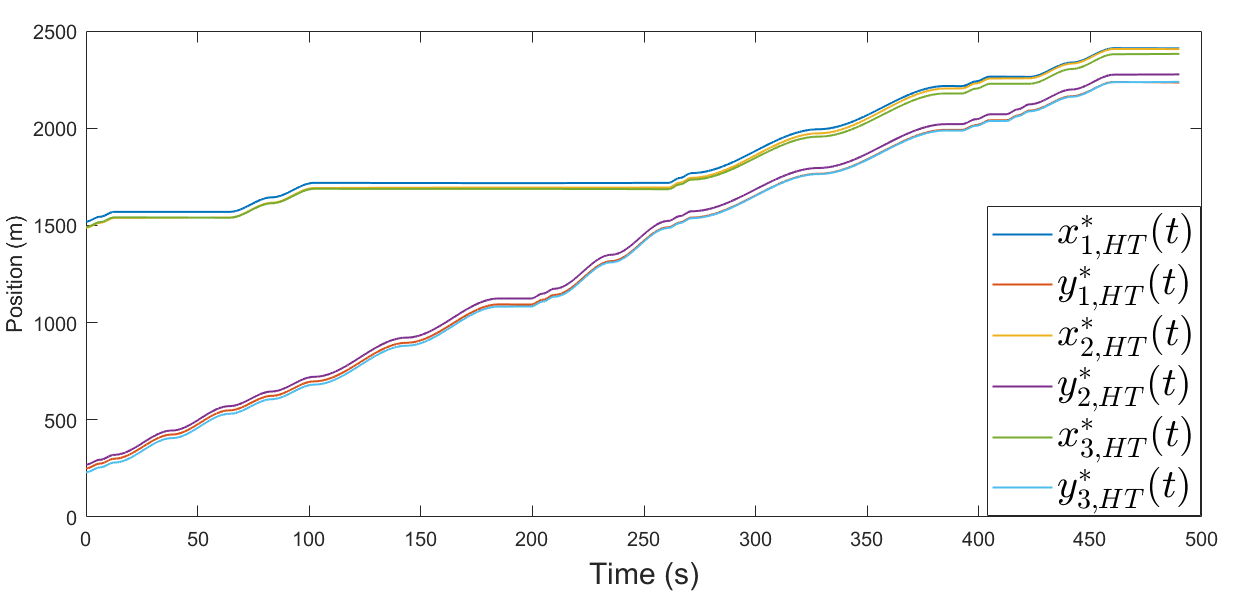}
\caption{Components of the optimal desired trajectories of the leaders for over time interval $\left[0,490\right]s$. }
\label{leadersdesired}
\end{figure}

\textbf{MQS Planning:}  It is desired that the MQS remains inside a ball of radius $r_{\mathrm{max}}=50m$ at any time $t\in [t_0,t_u]$.  By using A* search method,  the optimal intermediate  waypoints of the center of the containment ball are obtained. Then, the optimal path of the containment ball  is assigned and shown in Fig. \ref{MQS}. Given the intermediate waypoints of the center of containment ball, the desired trajectories of the leaders are determined by solving the constrained optimal control problem given in Section \ref{Planning3}. Given $t_s=0s$ and $\delta=1.04m$, $t_u=490s$ is assigned by using Algorithm \ref{euclid33}. Components of the optimal control input vector $\mathbf{u}_L^*(t)$, $\ddot{x}_{1,HT}^*(t)$, $\ddot{x}_{2,HT}^*(t)$, $\ddot{x}_{3,HT}^*(t)$, $\ddot{y}_{1,HT}^*(t)$, $\ddot{y}_{2,HT}^*(t)$, and $\ddot{y}_{3,HT}^*(t)$, and components of global desired positions of leaders, ${x}_{1,HT}^*(t)$, ${x}_{2,HT}^*(t)$, ${x}_{3,HT}^*(t)$, ${y}_{1,HT}^*(t)$, ${y}_{2,HT}^*(t)$, and ${y}_{3,HT}^*(t)$,  are plotted versus time $t$ in Figs. \ref{accelerationha} and \ref{leadersdesired}, respectively. Furthermore, deviation of every quadcopter from the global desired position is plotted in Fig. \ref{error-v5}. It is seen that deviation of no quadcopter exceeds $\delta=1.04m$ at any time $t\in [0,490]s$.

\begin{figure}[htb]
\centering
\includegraphics[width=3.3  in]{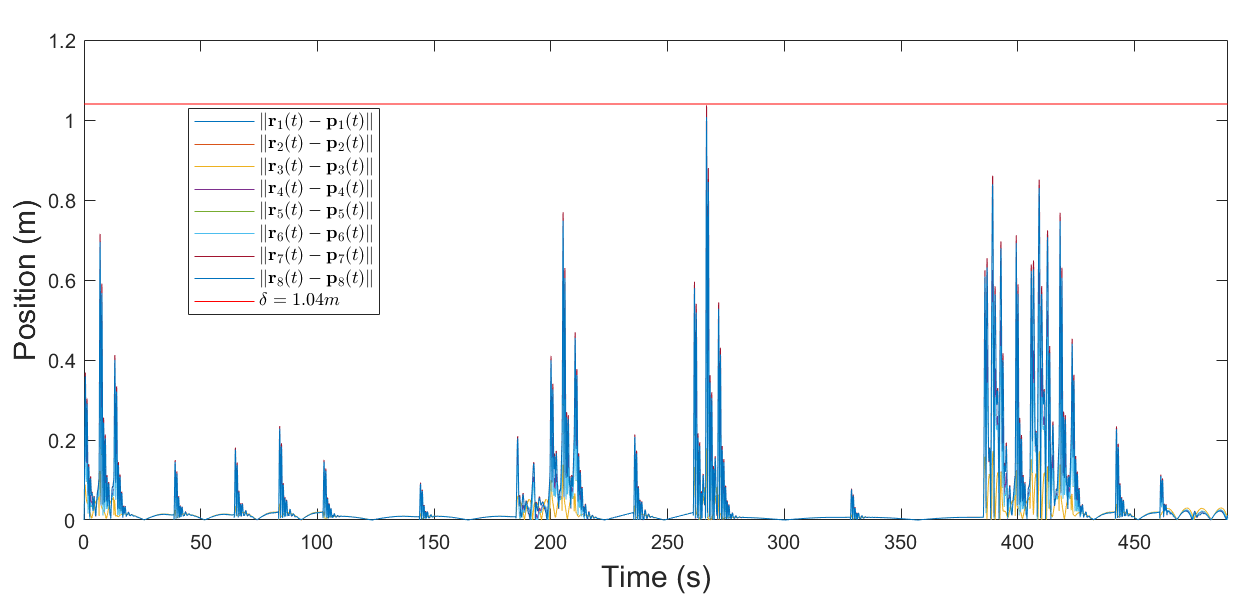}
\caption{Deviation of every quadcopter from its global desired trajectory  desired position over time interval $[0,490]s$. }
\label{error-v5}
\end{figure}

\section{Conclusion}\label{Conclusion}
This paper developed an algorithmic and formal approach for continuum deformation planning of a multi-quadcopter system coordinating in a geometrically-constrained environment. By using the principles of Lagrangian continuum mechanics, we obtained safety conditions for inter-agent collision avoidance and  follower containment through constraining the eigenvalues of the Jacobian matrix of the continuum deformation coordination.  To obtain safe and optimal transport of the MQS, we contain the MQS  by a rigid ball, and determine the intermediate waypoints of the containment ball using the A* search method. Given the intermediate configuration of the containment ball, we first determined the leaders' intermediate configurations by decomposing the homogeneous deformation coordination. Then, we assigned the optimal desired trajectories of the leader quadcopters by solving a constrained optimal control problem.

\section{\hspace{0.3cm}Acknowledgement}
This work has been supported by the National Science
Foundation under Award Nos. 1914581 and 1739525. The author gratefully thanks Professor Ella Atkins.

\bibliographystyle{IEEEtran}
\bibliography{reference}

\begin{thebibliography}{10}
\providecommand{\url}[1]{#1}
\csname url@samestyle\endcsname
\providecommand{\newblock}{\relax}
\providecommand{\bibinfo}[2]{#2}
\providecommand{\BIBentrySTDinterwordspacing}{\spaceskip=0pt\relax}
\providecommand{\BIBentryALTinterwordstretchfactor}{4}
\providecommand{\BIBentryALTinterwordspacing}{\spaceskip=\fontdimen2\font plus
\BIBentryALTinterwordstretchfactor\fontdimen3\font minus
  \fontdimen4\font\relax}
\providecommand{\BIBforeignlanguage}[2]{{%
\expandafter\ifx\csname l@#1\endcsname\relax
\typeout{** WARNING: IEEEtran.bst: No hyphenation pattern has been}%
\typeout{** loaded for the language `#1'. Using the pattern for}%
\typeout{** the default language instead.}%
\else
\language=\csname l@#1\endcsname
\fi
#2}}
\providecommand{\BIBdecl}{\relax}
\BIBdecl

\bibitem{zhang2018collision}
J.~Zhang, J.~Yan, P.~Zhang, and X.~Kong, ``Collision avoidance in fixed-wing
  uav formation flight based on a consensus control algorithm,'' \emph{IEEE
  Access}, vol.~6, pp. 43\,672--43\,682, 2018.

\bibitem{du2017pursuing}
S.-L. Du, X.-M. Sun, M.~Cao, and W.~Wang, ``Pursuing an evader through
  cooperative relaying in multi-agent surveillance networks,''
  \emph{Automatica}, vol.~83, pp. 155--161, 2017.

\bibitem{artunedo2017consensus}
A.~Artu{\~n}edo, R.~M. Del~Toro, and R.~E. Haber, ``Consensus-based cooperative
  control based on pollution sensing and traffic information for urban traffic
  networks,'' \emph{Sensors}, vol.~17, no.~5, p. 953, 2017.

\bibitem{cheng2018event}
B.~Cheng, X.~Wang, and Z.~Li, ``Event-triggered consensus of homogeneous and
  heterogeneous multiagent systems with jointly connected switching
  topologies,'' \emph{IEEE transactions on cybernetics}, vol.~49, no.~12, pp.
  4421--4430, 2018.

\bibitem{tu2017decentralized}
Z.~Tu, H.~Yu, and X.~Xia, ``Decentralized finite-time adaptive consensus of
  multiagent systems with fixed and switching network topologies,''
  \emph{Neurocomputing}, vol. 219, pp. 59--67, 2017.

\bibitem{munoz2017adaptive}
F.~Mu{\~n}oz, E.~S. Espinoza~Quesada, H.~M. La, S.~Salazar, S.~Commuri, and
  L.~R. Garcia~Carrillo, ``Adaptive consensus algorithms for real-time
  operation of multi-agent systems affected by switching network events,''
  \emph{International Journal of Robust and Nonlinear Control}, vol.~27, no.~9,
  pp. 1566--1588, 2017.

\bibitem{liu2018leader}
T.~Liu and J.~Huang, ``Leader-following attitude consensus of multiple rigid
  body systems subject to jointly connected switching networks,''
  \emph{Automatica}, vol.~92, pp. 63--71, 2018.

\bibitem{ma2020consensus}
Q.~Ma and S.~Xu, ``Consensus switching of second-order multiagent systems with
  time delay,'' \emph{IEEE Transactions on Cybernetics}, 2020.

\bibitem{wang2019fault}
X.~Wang and G.-H. Yang, ``Fault-tolerant consensus tracking control for linear
  multiagent systems under switching directed network,'' \emph{IEEE
  transactions on cybernetics}, vol.~50, no.~5, pp. 1921--1930, 2019.

\bibitem{shahab2019distributed}
M.~A. Shahab, B.~Mozafari, S.~Soleymani, N.~M. Dehkordi, H.~M. Shourkaei, and
  J.~M. Guerrero, ``Distributed consensus-based fault tolerant control of
  islanded microgrids,'' \emph{IEEE Transactions on Smart Grid}, vol.~11,
  no.~1, pp. 37--47, 2019.

\bibitem{liu2017kalman}
Q.~Liu, Z.~Wang, X.~He, and D.~Zhou, ``On kalman-consensus filtering with
  random link failures over sensor networks,'' \emph{IEEE Transactions on
  Automatic Control}, vol.~63, no.~8, pp. 2701--2708, 2017.

\bibitem{leblanc2011consensus}
H.~J. LeBlanc and X.~D. Koutsoukos, ``Consensus in networked multi-agent
  systems with adversaries,'' in \emph{Proceedings of the 14th international
  conference on Hybrid systems: computation and control}, 2011, pp. 281--290.

\bibitem{ji2008containment}
M.~Ji, G.~Ferrari-Trecate, M.~Egerstedt, and A.~Buffa, ``Containment control in
  mobile networks,'' \emph{IEEE Transactions on Automatic Control}, vol.~53,
  no.~8, pp. 1972--1975, 2008.

\bibitem{liu2012necessary}
H.~Liu, G.~Xie, and L.~Wang, ``Necessary and sufficient conditions for
  containment control of networked multi-agent systems,'' \emph{Automatica},
  vol.~48, no.~7, pp. 1415--1422, 2012.

\bibitem{li2016containment}
B.~Li, Z.-q. Chen, Z.-x. Liu, C.-y. Zhang, and Q.~Zhang, ``Containment control
  of multi-agent systems with fixed time-delays in fixed directed networks,''
  \emph{Neurocomputing}, vol. 173, pp. 2069--2075, 2016.

\bibitem{su2015multi}
H.~Su and M.~Z. Chen, ``Multi-agent containment control with input saturation
  on switching topologies,'' \emph{IET Control Theory \& Applications}, vol.~9,
  no.~3, pp. 399--409, 2015.

\bibitem{asgari2019necessary}
M.~Asgari and H.~Atrianfar, ``Necessary and sufficient conditions for
  containment control of heterogeneous linear multi-agent systems with fixed
  time delay,'' \emph{IET Control Theory \& Applications}, vol.~13, no.~13, pp.
  2065--2074, 2019.

\bibitem{atrianfar2020sampled}
H.~Atrianfar, ``Sampled-time containment control of high-order continuous-time
  mass under heterogenuous time-varying delays and switching topologies: a
  scrambling matrix approach,'' \emph{Neurocomputing}, vol. 395, pp. 24--38,
  2020.

\bibitem{cui2018command}
G.~Cui, S.~Xu, Q.~Ma, Z.~Li, and Y.~Chu, ``Command-filter-based distributed
  containment control of nonlinear multi-agent systems with actuator
  failures,'' \emph{International Journal of Control}, vol.~91, no.~7, pp.
  1708--1719, 2018.

\bibitem{ye2017observer}
D.~Ye, M.~Chen, and K.~Li, ``Observer-based distributed adaptive fault-tolerant
  containment control of multi-agent systems with general linear dynamics,''
  \emph{ISA transactions}, vol.~71, pp. 32--39, 2017.

\bibitem{zuo2019resilient}
S.~Zuo, F.~L. Lewis, and A.~Davoudi, ``Resilient output containment of
  heterogeneous cooperative and adversarial multigroup systems,'' \emph{IEEE
  Transactions on Automatic Control}, vol.~65, no.~7, pp. 3104--3111, 2019.

\bibitem{qin2019distributed}
H.~Qin, H.~Chen, Y.~Sun, and L.~Chen, ``Distributed finite-time fault-tolerant
  containment control for multiple ocean bottom flying node systems with error
  constraints,'' \emph{Ocean Engineering}, vol. 189, p. 106341, 2019.

\bibitem{xu2020distributed}
T.~Xu, G.~Lv, Z.~Duan, Z.~Sun, and J.~Yu, ``Distributed fixed-time
  triggering-based containment control for networked nonlinear agents under
  directed graphs,'' \emph{IEEE Transactions on Circuits and Systems I: Regular
  Papers}, vol.~67, no.~10, pp. 3541--3552, 2020.

\bibitem{rastgoftar2020scalable}
H.~Rastgoftar, E.~M. Atkins, and I.~Kolmanovsky, ``Scalable vehicle team
  continuum deformation coordination with eigen decomposition,'' \emph{arXiv
  preprint arXiv:2002.03036}, 2020.

\bibitem{rastgoftar2020fault}
H.~Rastgoftar, ``Fault-resilient continuum deformation coordination,''
  \emph{IEEE Transactions on Control of Network Systems}, 2020.

\end{thebibliography}
\begin{IEEEbiography}[{\includegraphics[width=1in,height=1.25in,clip,keepaspectratio]{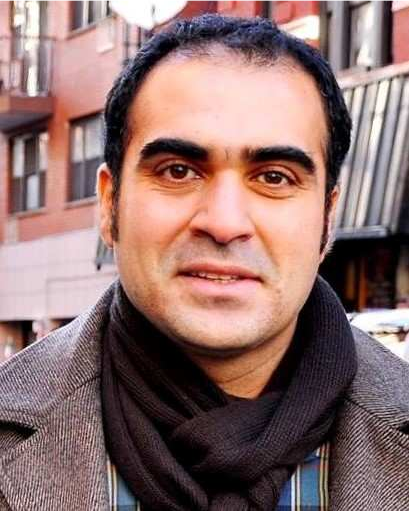}}]
{\textbf{Hossein Rastgoftar}} an Assistant Professor at Villanova University and an Adjunct Assistant Professor at the University of Michigan. He was an Assistant Research Scientist in the Aerospace Engineering Department from 2017 to 2020. Prior to that he was a postdoctoral researcher at the University of Michigan from 2015 to 2017. He received the B.Sc. degree in mechanical engineering-thermo-fluids from Shiraz University, Shiraz, Iran, the M.S. degrees in mechanical systems and solid mechanics from Shiraz University and the University of Central Florida, Orlando, FL, USA, and the Ph.D. degree in mechanical engineering from Drexel University, Philadelphia, in 2015. His current research interests include dynamics and control, multiagent systems, cyber-physical systems, and optimization and Markov decision processes.
\end{IEEEbiography}
\end{document}